\numberwithin{equation}{section}
\newtheorem{theorem}{Theorem}[section]
\newtheorem{corollary}[theorem]{Corollary}
\newtheorem{lemma}[theorem]{Lemma}
\newtheorem{proposition}[theorem]{Proposition}
\newtheorem{definition}[theorem]{Definition}
\newcommand{\C}{\mathbb{C}}
\newcommand{\D}{\mathbb{D}}
\newcommand{\M}{\mathbb{M}}
\newcommand{\N}{\mathbb{N}}
\newcommand{\R}{\mathbb{R}}
\newcommand{\BB}{\mathscr{B}}
\newcommand{\MM}{\mathscr{M}}
\newcommand{\calC}{{\ensuremath{\mathcal C}}}
\newcommand{\cE}{{\ensuremath{\mathcal E}}}
\newcommand{\cG}{{\ensuremath{\mathcal G}}}
\newcommand{\cV}{{\ensuremath{\mathcal V}}}
\newcommand{\ff}{{\mbox{\boldmath$f$}}}
\newcommand{\mm}{{\mbox{\boldmath$m$}}}
\newcommand{\rr}{{\mbox{\boldmath$r$}}}
\newcommand{\ggamma}{{\mbox{\boldmath$\gamma$}}}
\newcommand{\mmu}{{\mbox{\boldmath$\mu$}}}
\newcommand{\ssigma}{{\mbox{\boldmath$\sigma$}}}
\newcommand{\smmu}{{\mbox{\scriptsize\boldmath$\mu$}}}
\newcommand{\sfc}{{\sf c}}
\newcommand{\sfd}{{\sf d}}
\newcommand{\sfg}{{\sf g}}
\newcommand{\sfh}{{\sf h}}
\newcommand{\sfH}{{\sf H}}
\newcommand{\sfP}{{\sf P}}
\newcommand{\rme}{{\mathrm e}}
\newcommand{\rmC}{{\mathrm C}}
\newcommand{\rmD}{{\mathrm D}}
\newcommand{\rmH}{{\mathrm H}}
\newcommand{\rmI}{{\mathrm I}}
\newcommand{\Kliminf}{K\kern-3pt-\kern-2pt\mathop{\rm lim\,inf}\limits}  
\newcommand{\supp}{\mathop{\rm supp}\nolimits}   
\newcommand{\Lip}{\mathop{\rm Lip}\nolimits}          
\renewcommand{\d}{{\mathrm d}}
\newcommand{\restr}[1]{\lower3pt\hbox{$|_{#1}$}}
\newcommand{\topref}[2]{\stackrel{\eqref{#1}}#2}
\newcommand{\la}{{\langle}}                  
\newcommand{\down}{\downarrow}              
\newcommand{\up}{\uparrow}
\newcommand{\weakto}{\rightharpoonup}
\newcommand{\eps}{\varepsilon}  
\newcommand{\nchi}{{\raise.3ex\hbox{$\chi$}}}
\newcommand{\forevery}{\text{for every }}
\def\qed{\ifmmode 
  \else \leavevmode\unskip\penalty9999 \hbox{}\nobreak\hfill
  \fi               
    \qquad           \hbox{\hskip.5em $\square$
                \hskip.1em}}
\def\endproofsym{\qed}
\newenvironment{proof}[1][Proof]{\def\endproofsym{\qed}\trivlist\item[\hskip\labelsep{%
\noindent{\normalfont\emph{#1}.}\hskip .321429\parindent}]\ignorespaces}
{\endproofsym\endtrivlist}
\newcommand{\ent}[1]{\mathrm{Ent}_{\mm}(#1)}              
\newcommand{\entv}{\mathrm{Ent}_{\mm}}                    
\renewcommand{\mm}{\mathfrak m}
\newcommand{\BorelSets}[1]{\BB(#1)}
\newcommand{\Probabilities}[1]{\mathscr P(#1)}          
\newcommand{\ProbabilitiesTwo}[1]{\mathscr P_2(#1)}     
\newcommand{\AdmissiblePlanII}[2]{\Gamma(#1,#2)}        
\renewcommand{\C}{\mathsf{Ch}}
\newcommand{\LHeat}[1]{\sfP_{\kern-2pt#1}}
\newcommand{\RCD}[2]{\mathrm{RCD}(#1,#2)}
\newcommand{\CD}[2]{CD(#1,#2)}
\newcommand{\BE}[2]{BE(#1,#2)}   
\newcommand{\Gbil}[2]{\Gamma\big(#1,#2\big)}   
\newcommand{\Gq}[1]{\Gamma\big(#1\big)}
\newcommand{\Gdq}[1]{\Gamma_2(#1)}
\newcommand{\gdq}[1]{\gamma_2(#1)}
\newcommand{\Bilinear}[2]{\mathcal E(#1,#2)}
\newcommand{\Dirichlet}[1]{\mathcal E(#1)}
\newcommand{\Edmeas}[1]{\Gamma^\star_2(#1)}
\newcommand{\Esmeas}[1]{\Gamma^\perp_2(#1)}
\newcommand{\edmeas}[1]{\gamma_2(#1)}
\newcommand{\Ebilmeas}[2]{\Gamma^\star_2(#1,#2)}
\newcommand{\Ebilsmeas}[2]{\Gamma^\perp_2(#1,#2)}
\newcommand{\ebilmeas}[2]{\gamma_2(#1,#2)}
\newcommand{\DeltaE}{\Delta_\cE}
\renewcommand{\C}{{\sf Ch}}
\newcommand{\Capacity}{\mathrm{Cap}}
\newcommand{\Cp}{\Capacity}
\newcommand{\heat}[2]{\mathsf P_{#1}#2}
\newcommand{\DeltaEM}{{\DeltaE^{\star}}}
\renewcommand{\BE}[2]{\mathrm{BE}(#1,#2)}
\renewcommand{\CD}[2]{\mathrm{CD}(#1,#2)}\renewcommand{\RCD}[2]{\mathrm{RCD}(#1,#2)}
\renewcommand{\cV}{\mathbb {V}}
\renewcommand{\cG}{\mathbb{G}}
\newcommand{\cGz}{\cG_{\infty}}
\newcommand{\cVz}{\cV_{\kern-2pt\infty}}
\newcommand{\Gqd}[1]{\Gamma_2(#1)}
\newcommand{\Gqlin}[2]{\mathbf{\Gamma}_2[#1;#2]}
\newcommand{\cVu}{\cV^1_{\kern-2pt\infty}}
\newcommand{\cVd}{\cV^2_{\kern-2pt\infty}}
\newcommand{\DG}{\D_\infty}
\newcommand{\lref}[1]{$\langle$L.\ref{#1}$\rangle$}
\newcommand{\cLz}{\mathbb G_\infty}
\newcommand{\cDv}{\DG}
\newcommand{\cMz}{\mathbb M_\infty}
\newcommand{\tril}[3]{\rmH [#1](#2,#3)}
\newcommand{\QCh}{\textbf{\upshape[Q-Ch]}}
\newcommand{\CDKI}{$\mbox{\boldmath{$[\CD K\infty]$}}$}
\newcommand{\Length}{\textbf{\upshape[Length]}}
\newcommand{\Cont}{\textbf{\upshape[Cont]}}
\newcommand{\Wcont}{$\mbox{\boldmath{$[W_2\text{\textbf{\upshape-cont}}]$}}$}
\newcommand{\BEKI}{$\mbox{\boldmath{$[\BE K\infty]$}}$}
\newcommand{\EVIK}{$\mbox{\boldmath{$[\mathrm{EVI}_K]$}}$}
\renewcommand{\DeltaE}{\mathsf L}
\renewcommand{\AdmissiblePlanII}[2]{\Pi(#1,#2)}
\title{Self-improvement of the Bakry-\'Emery condition and
  Wasserstein contraction of the heat flow in \\
  $\RCD K\infty$ metric measure spaces}
\begin{document}

\author{
   Giuseppe Savar\'e\
   \thanks{Universit\`a di Pavia. email:
   \textsf{giuseppe.savare@unipv.it}. Partially supported by
   PRIN10-11 grant from MIUR for the project \emph{Calculus of Variations}.}
   }

\maketitle

\begin{abstract} 
  We prove that the linear ``heat'' flow in a $\RCD K\infty$ metric
  measure space $(X,\sfd,\mm)$ satisfies a contraction property
  with respect to every $L^p$-Kantorovich-Rubinstein-Wasser\-stein
  distance,
  $p\in [1,\infty]$. In particular, we obtain a precise estimate for the optimal
  $W_\infty$-coupling between two fundamental solutions 
  in terms of the distance of the initial points.

  The result is a consequence of the equivalence between 
  the $\RCD K\infty$ lower Ricci bound and the 
  corresponding Bakry-\'Emery condition 
  for the canonical Cheeger-Dirichlet form in $(X,\sfd,\mm)$.
  The crucial tool is the extension to the non-smooth metric measure
  setting of the Bakry's argument, that allows to improve
  the commutation estimates between the Markov semigroup and
  the \emph{Carr\'e du Champ} $\Gamma$ associated to the Dirichlet form.

  This extension is based on a new a priori estimate and a capacitary
  argument for regular and tight Dirichlet forms that are of
  independent interest.
\end{abstract}

\tableofcontents


\section{Introduction}

The investigation of the deep connections between lower Ricci
curvature bounds
(also in the broader sense of the Bakry-\'Emery curvature-dimension
condition $\BE KN$ \cite{Bakry-Emery84})
and optimal transport in Riemannian geometry started with the 
pioneering papers
\cite{Otto-Villani00,Codero-McCann-Schmuckenschlager01}. 
Since then a big effort have been made
to develop a synthetic theory
of curvature-dimension bounds for a general
metric-measure space $(X,\sfd,\mm)$ in absence
of a smooth differential structure.  

\subsubsection*{Lott-Sturm-Villani $\CD K\infty$ spaces}

In the approach developed by Sturm \cite{Sturm06I,Sturm06II} and 
Lott-Villani \cite{Lott-Villani09}
(see also \cite{Villani09}),
optimal transport provides a very useful and 
far-reaching point of view, in particular 
to obtain a stable notion with respect to measured 
Gromov-Hausdorff (or Gromov-Prokhorov) convergence
that includes all possibile Gromov-Hausdorff limits
of Riemannian manifolds under uniform dimension and lower curvature bounds
\cite{Cheeger-Colding97I,Cheeger-Colding00II,Cheeger-Colding00III}.

According to Lott-Sturm-Villani, a complete and separable 
metric space $(X,\sfd)$ endowed with a Borel probability
measure $\mm\in \Probabilities X$ (here we assume $\mm(X)=1$ for
simplicity, see \S\,\ref{subsec:basic} for a more general condition)
satisfies the $\CD K\infty$ curvature bound
if
the relative entropy functional $\entv:\Probabilities X\to
[0,\infty]$ induced by $\mm$ is
displacement $K$-convex in the Wasserstein space
$\ProbabilitiesTwo X$. The latter is the 
space of Borel probability measures with
finite quadratic moment endowed with the $L^2$
Kantorovich-Rubinstein-Wasserstein distance $W_2$,
see \S\,\ref{subsec:basic}.

A question that naturally arises in this metric setting concerns the 
relationships between the optimal transport and
the Bakry-\'Emery's approaches. 
Since the latter makes sense only in the framework of a Dirichlet form $\cE$
generating a linear Markov semigroup $(\heat t)_{t\ge0}$ in $L^2(X,\mm)$,
one has first to understand how to construct
a diffusion semigroup and 
an energy functional in a $\CD K\infty$ space.

Since the $\CD K\infty$ condition involves the geodesic
$K$-convexity of the entropy functional in 
the Wasserstein space, it is quite natural to consider 
the metric gradient flow $(\sfH_t)_{t\ge0}$ \cite{AGS08} 
of $\entv$ in $(\ProbabilitiesTwo X,W_2)$ 
(see \cite{Gigli10,AGS11a}). 
As showed initially by 
\cite{Jordan-Kinderlehrer-Otto98} in $\R^n$
and then extended to many different situations
by
\cite{Erbar10,Villani09,Ambrosio-Savare-Zambotti09,Sturm-Ohta-CPAM,GigliKuwadaOhta10},
it turns out \cite{AGS11b} that $(\sfH_t)_{t\ge0}$ essentially coincides with the 
$L^2$-gradient flow $(\sfP_t)_{t\ge0}$ of the 
convex and lower semicontinuous \emph{Cheeger energy}
\begin{equation}
  \label{eq:37pre}
  \C (f):=\inf\Big\{\liminf_{n\to\infty}\frac12\int_X |\rmD
  f_n|^2\,\d\mm:f_n\in \Lip_b(X),\quad
  f_n\to f\text{ in }L^2(X,\mm)\Big\},
\end{equation}
where the metric slope $|\rmD f|$ of a Lipschitz function $f:X\to \R$ 
is defined by
$|\rmD f|(x):=\limsup_{y\to x}|f(y)-f(x)|/\sfd(x,y)$.

$(\sfP_t)_{t\ge0}$ thus defines a (possibly nonlinear) semigroup of contractions in
$L^2(X,\mm)$ and, in fact, in any $L^p(X,\mm)$. Since it is also
positivity
preserving, it is a Markov semigroup if and only if
it is linear, or, equivalently, if $\C$ is a quadratic form in
$L^2(X,\mm)$,
thus satisfying the parallelogram rule
  \begin{equation}
  \label{eq:38pre}
  \C(f+g)+\C(f-g)=2\C(f)+2\C(g)\quad\text{for every }f,g\in D(\C).
  \tag{Q-Ch}
\end{equation}

\subsubsection*{$\RCD K\infty$-metric measure spaces and the 
Bakry-\'Emery $\BE K\infty$ condition}
Spaces satisfying Lott-Sturm-Villani $\CD K\infty$ conditions and 
\eqref{eq:38pre} have been introduced in 
\cite{AGS11b} as metric measure spaces with \emph{Riemannian Ricci
  curvature} bounded from below, $\RCD K\infty$ spaces in short.
This more restrictive class of spaces can also be characterised in terms of the 
\emph{Evolution variational inequality} formulation of
$(\sfH_t)_{t\ge0}$, see \eqref{eq:65}, 
that provides the $W_2$ contraction property
\begin{equation}
  \label{eq:56}
  W_2(\sfH_t \mu,\sfH_t \nu)\le \rme^{-K t}W_2(\mu,\nu)\quad
  \forevery \mu,\nu\in \ProbabilitiesTwo X.
\end{equation}
The $\RCD K\infty$ condition is still stable 
with respect to measured Gromov-Hausdorff convergence
\cite{AGS11b,GMS13} and thus
includes all possibile measured Gromov-Hausdorff limits
of Riemannian manifolds under uniform lower curvature bounds.

In $\RCD K\infty$ spaces 
$\cE:=2\C$ is a strongly local Dirichlet form
admitting a \emph{Carr\'e du champ} $\Gamma(f)$
that coincides with the squared minimal weak upper gradient 
$|\rmD f|_w^2$ associated to \eqref{eq:37pre}, see \eqref{eq:84} and \eqref{eq:51}.
In terms of the generator $\DeltaE:D(\DeltaE)\subset L^2(X,\mm)\to
L^2(X,\mm)$ 
of $(\sfP_t)_{t\ge0}$ this provides useful  
the Leibnitz and composition rules
\begin{displaymath}
  2\Gbil fg=\DeltaE(fg)-f\DeltaE g-g\DeltaE f,\quad
  \DeltaE(\Phi(f))=\Phi'(f)\DeltaE f+\Phi''(f)\Gq f
\end{displaymath}
at least for a suitable class of functions in $D(\DeltaE)$, see 
\S\,\ref{subsec:Leibnitz}. 

Distance and energy are intimately 
correlated by the explicit formula 
\eqref{eq:37pre} (that involves the metric slope of Lipschitz
functions) and by the somehow dual property that
expresses $\sfd$ as the canonical distance 
\cite{Biroli-Mosco95} associated to $\cE$:
\begin{subequations}
  \begin{gather}
    \label{eq:3}
    \text{every bounded function $f\in D(\cE)$ with $\Gq f\le 1$ has a
      continuous representative $\tilde f$,}\\
    \label{eq:78}
    \sfd(x,y):=\sup\Big\{\psi(x)-\psi(y):\psi\in D(\cE)\cap
    \rmC_b(X),\quad \Gq f\le 1\Big\}.
  \end{gather}
\end{subequations}
Having a Carr\'e du champ at disposal, it is
then possibile to consider a weak version
(see \eqref{eq:22})  of the
\emph{Carr\'e du champ it\'er\'e} 
\begin{equation}
  \label{eq:39}
  2\Gamma_2(f,g):=\DeltaE\Gbil fg-\Gbil f{\DeltaE g}-
  \Gbil g{\DeltaE f},
\end{equation}
and to prove a weak $\BE K\infty$ condition of the type
\begin{equation}
  \label{eq:80}
  \Gamma_2(f)\ge K\Gq f,\quad
  \text{where}\quad
    \Gq f:=\Gbil ff,\quad
  \Gamma_2(f):=\Gamma_2(f,f),
\end{equation}
in a suitable weaker  integral form (Definition \ref{def:BE}), but still sufficient to
get
the crucial pointwise gradient bound
\begin{equation}
  \label{eq:55}
  \Gq{\heat t f}\le |\rmD \heat t f|^2\le \rme^{-2K t}\heat t \Gq
  f\quad \forevery f\in \Lip_b(X).
\end{equation}
It turns out that the implication $\RCD K\infty\ \Rightarrow\ \BE
K\infty$ 
can also be inverted and the two points of view are eventually
equivalent.
This has been shown by \cite{AGS12}: starting from
a Polish topological space $(X,\tau)$ endowed with
a local Dirichlet form $\cE$ with the associated \emph{Carr\'e du champ}
$\Gamma$
and the intrinsic distance $\sfd$ satisfying (\ref{eq:3},b) and inducing
the topology $\tau$, if 
$\BE K\infty$ holds, then $(X,\sfd,\mm)$ 
is a $\RCD K\infty$ metric measure space.

\subsubsection*{Applications of $\BE K\infty$: 
refined gradient estimates and Wasserstein contraction}

The identification between $\RCD K\infty$ and $\BE K\infty$ 
lead to the possibility 
to apply a large numbers of the results and techniques
originally proved for smoother spaces satisfying the 
Bakry-\'Emery condition. 
Performing this project is not always simple, since 
proofs often use extra regularity or algebraic assumptions 
(see e.g.\ \cite[Page 24]{Bakry92}) that
prevent a direct application to the non smooth context.

Among the most useful properties, Bakry \cite{Bakry85,Bakry92} 
showed that the $\Gamma_2$ condition expressed through
the pointwise bounds \eqref{eq:55} is potentially self-improving,
since it leads to the stronger commutation inequality
\begin{equation}
  \label{eq:79}
  \Big(\Gq{\heat t f}\Big)^{\alpha}\le \rme^{-2\alpha K t}\heat t\Big( \Gq
  f^\alpha\Big)\quad
  \forevery \alpha\in [1/2,2].
\end{equation}
\eqref{eq:79} is in fact a consequence of the crucial estimate
\begin{equation}
  \label{eq:40}
  \Gq{\Gq f}\le 4\Big(\Gamma_2(f)-K\Gq f\Big)\Gq f,
\end{equation}
a formula whose meaning can be better 
 understood recalling that 
in a Riemannian manifold $(\M^d,\sfg)$ 
endowed with the canonical Riemannian volume $\mm=\mathrm{Vol}_\sfg$,
we have
\begin{equation}
  \label{eq:82}
  \Gq f=|\rmD f|_\sfg^2,\quad
  \Gqd f-K\Gq f\ge|\rmD^2 f|_\sfg^2,\quad
  \Gq{\Gq f}=\Big|\rmD \big|\rmD f\big|_\sfg^2\Big|_\sfg^2\le 
  4|\rmD^2 f|_\sfg^2\,|\rmD f|_\sfg^2.
\end{equation}
\eqref{eq:40}
can be derived by applying 
the $\Gamma_2$ inequality \eqref{eq:80} to 
polynomials of two or more functions $f_1,f_2,\cdots$.
However the Bakry's clever strategy of \cite{Bakry85,Bakry92}  
requires a multivariate differential formula for
the $\Gamma_2$ operator, that typically involves
further smoothness assumptions.

The aim of the present paper is twofold: from one side, we want to
show how to obtain the estimate \eqref{eq:40} in 
a very general setting, starting from the 
weak integral formulation of $\BE K\infty$.

This result is independent of the theory of
metric measure spaces, and
it is obtained for general Dirichlet forms in Polish spaces
satisfying standard regularity and tightness assumptions.
It relies on a simple estimate showing that $\Gq f\in D(\cE)$ 
if $f$ belongs to the space $\DG$, whose elements $f$ are
characterised by
$f\in D(\DeltaE)$ with $\Gq f\in L^\infty(X,\mm),\ 
\DeltaE f\in D(\cE)$. Tightness and regularity of $\cE$ 
are then sufficient to give a measure-theoretic sense to
$\DeltaE\Gq f$, to $\Gdq f$ and to multivariate calculus for
$\Phi\circ f$ thanks to capacitary arguments.
The main point here is that $\Gamma_2(f)$
may be singular with respect to $\mm$, but its singular part
is nonnegative; moreover, the multiplication of
the measure $\Gamma_2(f)$ with functions in $D(\cE)$ 
still makes sense since the latter admit a quasi continuous
representative
and polar sets are negligible w.r.t.\ the measure $\Gamma_2(f)$.

The derivation of \eqref{eq:79} from \eqref{eq:4} follows
then the ideas of \cite{Bakry-Ledoux06,Bakry06,Wang11},
suitably adapted to the weak integral version of \eqref{eq:80}.

Finally, the application of 
\eqref{eq:80} to contraction estimates for
the heat flow $(\sfH_t)_{t\ge0}$ in Wasserstein spaces
follows the Kuwada's duality approach \cite{Kuwada10},
thanks to \eqref{eq:3}, \eqref{eq:78} and 
the refined argument developed in \cite{AGS12}.
We can then prove the optimal contraction estimate for every $L^p$-Wasserstein 
distance
\begin{equation}
  \label{eq:56bis}
  W_p(\sfH_t \mu,\sfH_t \nu)\le \rme^{-K t}W_p(\mu,\nu)\quad
  \forevery \mu,\nu\in \Probabilities X,\quad p\in [1,\infty],
\end{equation}
and, when $K\ge0$, for 
any transport cost depending on the distance $\sfd$ in 
an increasing way, see \eqref{eq:21}
(see \cite{Natile-Peletier-Savare11} for a similar estimate
in $\R^n$). 

\subsubsection*{Plan of the paper}
We will recall in Section \ref{sec:prel} 
a few basic results concerning Dirichlet forms, \emph{Carr\'e du champ}, 
multivariate differential calculus and capacities.
A simple but important estimate is proved in Lemma \ref{le:Lapmeas}.

After a brief review of the weak formulation of
the $\BE K\infty$ condition, Section \ref{sec:BE} 
contains the main properties for the measure theoretic interpretation
of the 
\emph{Carr\'e du champ it\'er\'e} $\Gamma_2$ and
the corresponding multivariate calculus rules.
The main estimates are then proved in Theorem \ref{thm:main1} and its
Corollary
\ref{cor:kuwada}.

Applications to $\RCD K\infty$ spaces 
and to Wasserstein contraction of the heat flow are eventually discussed in the
last
section \ref{sec:RCD}.

\subsubsection*{Acknowledgment}

We would like to thank Luigi Ambrosio, Nicola Gigli, Michel Ledoux
for various fruitful discussions and the anonymous reviewer for the 
accurate and valuable report.

\section{Preliminaries}
\label{sec:prel}

\subsection{Notation, Dirichlet forms and Carr\'e du Champ}
Let $(X,\tau)$ be a Polish topological space.
We will denote by $\BorelSets X$ the collection of its Borel sets
and by $\MM(X)$ the space of 
Borel signed measures with finite total variation, i.e.~$\sigma$-additive maps
$\mu:\BorelSets X\to \R.$
$\MM(X)$ is endowed with the weak convergence with respect to
the duality with the continuous and bounded functions
of $\rmC_b(X)$. $\MM_+(X)$ and $\Probabilities X$ will
denote the convex subsets of 
nonnegative finite measures and of probabilities measures in $X$, 
respectively.

We will consider a $\sigma$-finite Borel measure $\mm\in \MM_+(X)$
with full support $\supp(\mm)=X$ and a strongly local, symmetric Dirichlet form
$\cE:L^2(X,\mm)\to[0,\infty]$ 
with proper domain $\cV:=\big\{f\in L^2(X,\mm):\cE(f)<\infty\big\}$ dense in $L^2(X,\mm)$.
$\cE$ generates a mass preserving Markov semigroup $(\heat t)_{t\ge0}$
in $L^2(X,\mm)$
with generator $\DeltaE$ and
domain $D(\DeltaE)$ dense in $\cV$.

We will still use the symbol $\cE$ to denote the associated bilinear form
in $\cV$.
$\cV$ is an Hilbert space with the graph norm induced by $\cE$:
\begin{displaymath}
  \|f\|^2_\cV:=\|f\|_{L^2(X,\mm)}^2+\cE(f,f).
\end{displaymath}
We will assume that $\cE$ admits a \emph{Carr\'e du Champ} $\Gbil\cdot\cdot$:
it is a symmetric, bilinear and continuous map
$\Gamma:\cV\times\cV\to L^1(X,\mm)$,
which is uniquely characterised
in the algebra $\cV\cap L^\infty(X,\mm)$ by
\begin{displaymath}
  2\int_X \Gbil fg\varphi\,\d\mm=\cE(f,g\varphi)+\cE(g,f\varphi)
  -\cE(fg,\varphi)
  \quad\forevery \varphi\in \cV\cap L^\infty(X,\mm).
\end{displaymath}
In the following we set
\begin{equation}
  \label{eq:20}
  \begin{gathered}
    \cVz:=\cV\cap L^\infty(X,\mm),\quad \cLz:=\{f\in \cVz:\Gq f\in
    L^\infty(X,\mm)\}.
  \end{gathered}
\end{equation}

\subsection{Leibnitz rule and multivariate calculus}
\label{subsec:Leibnitz}
We recall now a few useful calculus rules.
We will consider smooth functions $\Phi,\Psi:\R^n\to \R$ with
$\Phi(0)=\Psi(0)=0$, 
we set $\Phi_i:=\partial_i\Phi$, $\Phi_{ij}:=\partial_{ij}\Phi$,
$i,j=1,\cdots, n$, and similarly for $\Psi$.
We will denote by $\ff:=(f_i)_{i=1}^n$ a $n$-uple of 
real measurable functions defined on $X$ and by $\Phi(\ff)=
\Phi(f_1,\cdots,f_n)$ the corresponding composed function.

For a proof of the following properties, we refer to 
\cite[Ch.~I, \S 6]{Bouleau-Hirsch91}: notice that 
we do not assume any bounds on the derivatives of $\Phi$ and $\Psi$
since they will be composed with (essentially) bounded functions.
\begin{enumerate}[$\langle$L.1$\rangle$] 
\item \label{l1}
  $\cVz$ and $\cLz$ are closed with respect to pointwise
  multiplication
  (see \cite[Ch.~I, Cor.~3.3.2]{Bouleau-Hirsch91} and the next Leibnitz rule \eqref{eq:18}).
\item \label{l2}
  If $f\in \cV$ and $g\in \cLz$ then
  $fg\in \cV$.
\item \label{l3}
  If $f,g\in \cVz$ (or $f\in \cV$ and $g\in \cLz$) and $h\in \cV$ then
  \cite[Ch.~I, Cor.~6.1.3]{Bouleau-Hirsch91}
  \begin{equation}
    \label{eq:18}
    \Gbil{fg}h=f\Gbil gh+g\Gbil fh,\quad
    \Gq {fg}=f^2\Gq g+g^2\Gq f+2fg\Gbil fg.
  \end{equation}
\item \label{l4}
  If $(f_i)_{i=1}^n\in (\cVz)^n$
  the functions $\Phi(\ff)$, $\Psi(\ff)$ belong to $\cVz$ and
  \cite[Ch.~I, Cor.~6.1.3]{Bouleau-Hirsch91} 
  \begin{equation}
    \label{eq:49}
    \Gbil{\Phi(\ff)}{\Psi(\ff)}=\sum_{i,j} \Phi_i(\ff)\Psi_j(\ff)\Gbil{f_i}{f_j}.
  \end{equation}
\item \label{l5}
  If $f_i\in D(\DeltaE)\cap \cLz$ then $\Phi(\ff)\in
  D(\DeltaE)\cap\cLz$ with
  \cite[Ch.~I, Cor.~6.1.4]{Bouleau-Hirsch91} 
\begin{equation}
  \label{eq:15}
  \DeltaE (\Phi(\ff))=
  \sum_i \Phi_i(\ff)\DeltaE f_i+
  \sum_{i,j} \Phi_{ij}(\ff)\Gbil{f_i}{f_j}.
\end{equation}
\item \label{l6}
  $D(\DeltaE)\cap\cLz$ is closed with respect to pointwise
  multiplication: 
  if $f_i\in D(\DeltaE)\cap \cLz$ then
  \begin{equation}
    \label{eq:19}
    \DeltaE (f_1\,f_2)=f_1\,\DeltaE f_2+f_2\,\DeltaE f_1+2\,\Gbil {f_1}{f_2}.
  \end{equation}
\end{enumerate}

\subsection{Quasi-regular Dirichlet forms, capacity and measures with finite energy.}
\label{subsec:cap}
\newcommand{\cVrestr}[1]{\cV_{\kern-2pt #1}}
\newcommand{\cVl}[1]{\cV_{\kern-2pt #1}}
\newcommand{\cp}{\mathrm{Cap}}

We follow here the approach developed by Ma and R\"ockner, 
see 
\cite[III.2, III.3, IV.3]{Ma-Rockner92} 
(covering the general case of a
possibly non-symmetric Dirichlet form) and
\cite[1.3]{Chen-Fukushima12}.
If $F$ is a closed subset of $X$ we set 
\begin{displaymath}
  \cVrestr F:=\Big\{f\in \cV:f(x)=0\text{ for $\mm$-a.e.\ $x\in
    X\setminus F$}\Big\}.
\end{displaymath}
\begin{definition}[Nests, polar sets, and 
  quasi continuity {\cite[III.2.1]{Ma-Rockner92},
    \cite[1.2.12]{Chen-Fukushima12}}]\ \\
  An $\cE$-nest is an increasing sequence $(F_k)_{k\in \N}$ of closed
  subsets of $X$ 
  such that $\cup_{k\in \N}\cVrestr{F_k}$ is dense in
  $\cV$.\\
  A set $N\subset X$ is $\cE$-polar if there is an $\cE$-nest 
  $(F_k)_{k\in\N}$ such that $N\subset X\setminus \cup_{k\in \N} F_k$.
  If a property of points in $X$ 
  holds in a complement of an $\cE$-polar set we say that 
  it holds $\cE$-quasi-everywhere ($\cE$-q.e.). \\
  A function $f:X\to \R$ is said to be $\cE$-quasi-continuous if 
  there exists an $\cE$-nest $(F_k)_{k\in\N}$ such that every restriction
  $f\restr{F_k}$ is continuous on $F_k$. 
\end{definition}
$\cE$-nests and $\cE$-polar sets can also be 
characterized in terms of capacities; we recall here a version
that we will be useful later on. 
The capacity $\cp$ (it corresponds to
$\cp_{h,1}$ with $h\equiv 1$ in the notation of \cite{Chen-Fukushima12}) 
of an open set $A\subset X$
is defined by
$$
\cp(A):=\inf \big\{ \|u\|^2_\cV:\ \text{$u\geq
1$ $\mm$-a.e. in $A$}\big\},
$$%
and it can be extended to arbitrary sets $B\subset X$ by
\begin{displaymath}
  \cp(B):=\inf\big\{\cp(A):B\subset A, A\text{ open}\big\}.
\end{displaymath}
Notice also that  $\cp(A)\geq \mm(A)$. 

\begin{theorem}[{\cite[1.2.14]{Chen-Fukushima12}}]
  \label{thm:criterium}
  Let us suppose that there exists a nondecreasing sequence $(X_n)_{n\in
    \N}$ of 
  open subsets of $X$ such that 
  \begin{equation}
    \label{eq:86}
    \Cp(X_n)<\infty,\quad
    \overline X_n\subset X_{n+1},\quad
    (\overline X_n)_{n\in \N}\text{ is an $\cE$-nest}.
  \end{equation}
  \begin{enumerate}[$(i)$]
  \item A nondecreasing sequence of closed subsets $F_k\subset X$ is an
    $\cE$-nest if and only if
    \\$\lim_{k\to\infty}\cp(X_n\setminus
    F_k)=0$ for every $n\in \N$.
    \item $N\subset X$ is an $\cE$-polar set if and only if
      $\cp(N)=0$.
    \end{enumerate}
\end{theorem}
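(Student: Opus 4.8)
The plan is to deduce both statements from three blocks of standard facts about the capacity $\Cp$ that become available once the exhausting sequence \eqref{eq:86} is fixed. First, $\Cp$ is monotone, countably subadditive, and continuous along increasing sequences of \emph{arbitrary} subsets of $X$. Second, every open set $A$ with $\Cp(A)<\infty$ carries an \emph{equilibrium potential} $e_A\in\cV$, the element of least $\cV$-norm in the closed convex set $\{u\in\cV:u\ge 1\ \mm\text{-a.e.\ in }A\}$: it satisfies $0\le e_A$, $\Cp(A)=\|e_A\|_\cV^{2}$, and its quasi-continuous version is $\ge 1$ $\cE$-q.e.\ in $A$. Third, the quasi-regularity encoded by \eqref{eq:86} gives that every $f\in\cV$ has a quasi-continuous version, that quasi-continuous functions are finite $\cE$-q.e., that $\cV$-convergence forces $\cE$-q.e.\ convergence of the quasi-continuous versions along a subsequence, and that countable unions and arbitrary subsets of $\cE$-polar sets are $\cE$-polar. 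I will also use that $t\mapsto t^{+}$ and the truncations $t\mapsto(-N)\vee t\wedge N$ are normal contractions, so they do not increase the $\cV$-norm.

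For $(i)$, suppose first $(F_k)$ is a nest and fix $n$. By \eqref{eq:86} we may form $e:=e_{X_n}$ and let $u_k$ be the $\cV$-orthogonal projection of $e$ onto $\cV_{F_k}=\{v\in\cV:v=0\ \mm\text{-a.e.\ in }X\setminus F_k\}$; since these subspaces increase with dense union, $u_k\to e$ in $\cV$. On the open set $X_n\setminus F_k$ one has $u_k=0$ and $e\ge1$ $\mm$-a.e., so $(e-u_k)^{+}\ge1$ $\mm$-a.e.\ there, whence
\begin{displaymath}
  \Cp(X_n\setminus F_k)\ \le\ \|(e-u_k)^{+}\|_\cV^{2}\ \le\ \|e-u_k\|_\cV^{2}\ \longrightarrow\ 0 .
\end{displaymath}
For the converse, assume $\Cp(X_n\setminus F_k)\to0$ for every $n$. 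Since $(\overline X_n)$ is a nest and truncations converge in $\cV$, the bounded functions of $\bigcup_n\cV_{\overline X_n}$ are dense in $\cV$, so it suffices to put one such $w$ (with $|w|\le M$ and $w=0$ $\mm$-a.e.\ off $\overline X_m\subset X_{m+1}$) in the $\cV$-closure of $\bigcup_k\cV_{F_k}$. Let $g_k:=e_{X_{m+1}\setminus F_k}$, so $\|g_k\|_\cV^{2}=\Cp(X_{m+1}\setminus F_k)\to0$, and set $v_k:=(w-Mg_k)^{+}-(-w-Mg_k)^{+}$. Because $g_k\ge1$ $\mm$-a.e.\ in $X_{m+1}\setminus F_k$ and $w=0$ off $X_{m+1}$, one checks $v_k=0$ $\mm$-a.e.\ in $X\setminus F_k$, i.e.\ $v_k\in\cV_{F_k}$; moreover $|w-v_k|\le Mg_k$, so $v_k\to w$ in $L^2$, while $\{v_k\}$ is bounded in $\cV$ (each summand is a normal contraction of $w\mp Mg_k$). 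Hence $v_k\rightharpoonup w$ weakly in the Hilbert space $\cV$, and by Mazur's lemma $w$ lies in the $\cV$-closure of the convex hull of $\{v_k:k\ge K\}$; since a convex combination of $v_{k_1},\dots,v_{k_r}$ belongs to the single space $\cV_{F_{\max_i k_i}}$, we get $w\in\overline{\bigcup_k\cV_{F_k}}$, so $(F_k)$ is a nest.

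For $(ii)$, if $N$ is $\cE$-polar, pick a nest $(F_k)$ with $N\subset X\setminus\bigcup_k F_k$; then $N\cap X_n\subset X_n\setminus F_k$ for all $k$, so $(i)$ gives $\Cp(N\cap X_n)\le\inf_k\Cp(X_n\setminus F_k)=0$, and continuity of $\Cp$ along $(N\cap X_n)_n$ yields $\Cp\big(N\cap\bigcup_n X_n\big)=0$; the residual part $N\setminus\bigcup_n X_n$ lies in the complement of the nest $(\overline X_n)$, which under \eqref{eq:86} is $\Cp$-negligible (and empty whenever the $X_n$ exhaust $X$), so $\Cp(N)=0$. Conversely, if $\Cp(N)=0$, take open $A_j\supset N$ with $\Cp(A_j)\le4^{-j}$, equilibrium potentials $e_j:=e_{A_j}$ with $0\le e_j$, $\|e_j\|_\cV\le2^{-j}$, and quasi-continuous versions $\tilde e_j\ge1$ $\cE$-q.e.\ in $A_j\supset N$. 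Then $e:=\sum_j e_j$ converges in $\cV$ (the series being absolutely $\cV$-convergent); along a subsequence the quasi-continuous partial sums converge $\cE$-q.e.\ to the quasi-continuous version $\tilde e$, and since $\tilde e\ge\sum_{j\le m_i}\tilde e_j\ge m_i$ $\cE$-q.e.\ in $N$ for every $i$, we get $\tilde e=+\infty$ $\cE$-q.e.\ in $N$. As $\tilde e$ is finite $\cE$-q.e., $N$ is contained, up to an $\cE$-polar set, in the $\cE$-polar set $\{\tilde e=+\infty\}$, hence $N$ is $\cE$-polar.

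The main obstacle is the converse half of $(i)$: one must turn the $L^2$-smallness of $w-v_k$ into membership of $w$ in the \emph{$\cV$-closure} — not merely the $L^2$-closure — of $\bigcup_k\cV_{F_k}$. The route above deliberately avoids estimating $\|w-v_k\|_\cV$ directly (it does \emph{not} tend to $0$) and instead trades it for weak compactness in the Hilbert space $\cV$, Mazur's lemma, and the nestedness of the $\cV_{F_k}$. What remains is the measure-theoretic bookkeeping with quasi-continuous versions and exceptional $\cE$-polar sets in $(ii)$, for which the facts collected in the first paragraph — available precisely because \eqref{eq:86} makes $\cE$ quasi-regular — do the heavy lifting.
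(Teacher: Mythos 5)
The paper does not give its own proof of this statement; it is cited from Chen--Fukushima (Theorem 1.2.14). So I am assessing your argument on its own merits and against the standard route.

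Your proof of $(i)$ is correct and is essentially the classical one: the forward direction via the equilibrium potential $e_{X_n}$ (whose existence uses $\Cp(X_n)<\infty$ from \eqref{eq:86}), the $\cV$-orthogonal projections onto the increasing subspaces $\cV_{F_k}$, and the normal-contraction bound $\|(e-u_k)^{+}\|_\cV\le\|e-u_k\|_\cV$; the converse via the truncation $v_k=(w-Mg_k)^{+}-(-w-Mg_k)^{+}$, the $\cV$-boundedness of $(v_k)$, weak convergence, and Mazur's lemma combined with the nestedness of the $\cV_{F_k}$. That last trick — trading the failure of strong $\cV$-convergence of $v_k$ for weak compactness and Mazur — is exactly the standard device, well identified.

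Part $(ii)$ has two real problems.

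First, the converse (``$\Cp(N)=0\Rightarrow N$ polar'') is argued through quasi-continuous versions $\tilde e_j$ of equilibrium potentials, the $\cE$-q.e.\ lower bound $\tilde e_j\ge1$ on $A_j$, q.e.\ convergence of partial sums along a subsequence, and the q.e.\ finiteness of $\tilde e$. None of these are available from \eqref{eq:86} alone: the theorem is stated \emph{before} quasi-regularity is introduced and is in fact used in Lemma \ref{le:qr-criterium} to help \emph{establish} quasi-regularity, so invoking ``quasi-regularity encoded by \eqref{eq:86}'' is circular at this point of the paper. The clean argument that fits the hypotheses and only uses $(i)$ is: set $G_k:=X\setminus\bigcup_{j>k}A_j$, a nondecreasing sequence of closed sets with $N\subset\bigcap_k\bigcup_{j>k}A_j=X\setminus\bigcup_kG_k$; then for every $n$,
\begin{displaymath}
\Cp(X_n\setminus G_k)=\Cp\Big(X_n\cap\bigcup_{j>k}A_j\Big)\le\sum_{j>k}\Cp(A_j)\le\sum_{j>k}4^{-j}\longrightarrow0,
\end{displaymath}
so $(G_k)$ is a nest by $(i)$ and $N$ is polar. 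This avoids quasi-continuous versions entirely and matches the scope of the hypothesis.

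Second, in the forward direction you dispose of the residual part $N\setminus\bigcup_nX_n$ by asserting that $X\setminus\bigcup_n\overline X_n$ is ``$\Cp$-negligible under \eqref{eq:86}.'' But that set is $\cE$-polar by definition (it is the complement of the union of a nest), so ``$\Cp$-negligible'' is precisely an instance of the implication you are trying to prove — the step is circular as written. You should either give a direct argument for $\Cp(X\setminus\bigcup_nX_n)=0$, or note (as Lemma \ref{le:qr-criterium} and the $\RCD K\infty$ application implicitly do) that in all uses one has $\bigcup_nX_n=X$ so the residual part is empty. Your parenthetical remark shows you noticed the issue, but the theorem as stated does not assume exhaustion, so a parenthesis is not a proof.
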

When $\mm(X)<\infty$ then $\cp(X)=\mm(X)< \infty$, so that
\eqref{eq:86} is always satisfied by choosing $X_n\equiv X$.
In this case a function $f:X\to\R$ is
      $\cE$-quasi-continuous if 
for every $\eps>0$ there exists a closed set $C_\eps\subset X$
such that $f\restr{C_\eps}$ is continuous and 
$\cp(X\setminus C_\eps)<\eps$.
\begin{definition}[Quasi-regular Dirichlet forms]
  \label{def:quasi-regular}
  The Dirichlet form $\cE$ is quasi-regular if 
  \begin{enumerate}[\rm $\langle$QR.1$\rangle$]
  \item 
    \label{item:QR1} There exists an $\cE$-nest $(F_k)_{k\in \N}$ consisting of
    compact sets.
  \item \label{item:QR2}
    There exists a dense subset of $\cV$ whose elements have
    $\cE$-quasi-continuous 
    representatives.
  \item 
    \label{item:QR3} There exists an $\cE$-polar set $N\subset X$ and 
    a countable collection of $\cE$-quasi-continuous
    functions $(f_k)_{k\in \N}\subset \cV$ separating the points
    of $X\setminus N$.
  \end{enumerate}
\end{definition}
If $\cE$ is quasi-regular, then 
\cite[Remark 1.3.9(ii)]{Chen-Fukushima12}
\begin{equation}
\text{every function $f\in \cV$ admits an $\cE$-quasi-continuous
representative $\tilde f$,}
\label{eq:87}
\end{equation}
$\tilde f$ is unique up to q.e.~equality.
Notice that 
\begin{equation}
  \text{if $f\in \cVz$ with $|f|\le M$ $\mm$~a.e.\ in
$X$, then $|\tilde f|\le M$ q.e.}\label{eq:60}
\end{equation}
 When $\mm(X)<\infty$ so that $\cp(X)<\infty$, 
Theorem \ref{thm:criterium}$(i)$ shows that
$\langle$QR.\ref{item:QR1}$\rangle$ is equivalent to the tightness
condition
\begin{equation}
\text{there exist compact sets $K_n\subset X$, $n\geq 1$,
  such that $\lim_{n\to\infty}\cp(X\setminus K_n)= 0$.}\label{eq:tight}
\end{equation}
 In the general case of a $\sigma$-finite measure $\mm$ satisfying
\eqref{eq:86}, we have the
following simple criterium of quasi-regularity, where 
(with a slight abuse of notation) we will denote by 
$\cV\cap \rmC(X)$ the subspace of $\cV$ consisting of those
functions which 
admits a continuous representative.
\begin{lemma}[A criterium for quasi-regularity]
  \label{le:qr-criterium}
  Let us assume that there exists a nondecreasing 
  sequence $(X_n)_{n\in \N}$ of open subsets of
  $X$ satisfying \eqref{eq:86} 
  and let us suppose that 
  \begin{enumerate}[$\langle\mathrm{QR}.1'\rangle$]
  \item For every $n,m\in \N$ there exists a compact set
    $K_{n,m}\subset X$ such that $\cp(X_n\setminus K_{n,m})\le
    1/m$.
    \item
      $\cV\cap \rmC(X)$ is dense in $\cV$ and
      it separates the points of $X$.
  \end{enumerate}
  Then $\cE$ is quasi-regular.
\end{lemma}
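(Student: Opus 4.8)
The plan is to verify directly the three conditions $\langle\mathrm{QR}.1\rangle$, $\langle\mathrm{QR}.2\rangle$, $\langle\mathrm{QR}.3\rangle$ of Definition~\ref{def:quasi-regular}; only $\langle\mathrm{QR}.1\rangle$ carries any real content, the other two being soft consequences of $\langle\mathrm{QR}.2'\rangle$ together with elementary topology.

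For $\langle\mathrm{QR}.1\rangle$ I would manufacture a nondecreasing $\cE$-nest of compact sets out of the doubly-indexed family $(K_{n,m})_{n,m\in\N}$ supplied by $\langle\mathrm{QR}.1'\rangle$. Setting
\[
  F_k:=\bigcup_{1\le n\le j\le k}K_{n,j},\qquad k\in\N,
\]
each $F_k$ is a finite union of compact sets, hence compact (and closed, $X$ being Hausdorff), and $F_k\subset F_{k+1}$ by construction. For every fixed $n$ and every $k\ge n$ one has $K_{n,k}\subset F_k$, so $\cp(X_n\setminus F_k)\le\cp(X_n\setminus K_{n,k})\le 1/k\to 0$ as $k\to\infty$. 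Since $(X_n)_{n\in\N}$ satisfies \eqref{eq:86}, Theorem~\ref{thm:criterium}$(i)$ then yields that $(F_k)_{k\in\N}$ is an $\cE$-nest, and, being made of compact sets, it is exactly the nest required by $\langle\mathrm{QR}.1\rangle$.

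For $\langle\mathrm{QR}.2\rangle$ I would simply take the dense subspace $\cV\cap\rmC(X)$ provided by $\langle\mathrm{QR}.2'\rangle$ and observe that each of its elements has an $\cE$-quasi-continuous representative: its continuous representative $\tilde f$ restricts to a continuous function on every $\overline X_n$, and $(\overline X_n)_{n\in\N}$ is an $\cE$-nest by \eqref{eq:86}. For $\langle\mathrm{QR}.3\rangle$ I would take the (trivially $\cE$-polar) set $N:=\emptyset$ and extract a countable point-separating subfamily of $\cV\cap\rmC(X)$ by a Lindel\"of argument: since $X$ is Polish it is second countable, hence $X\times X$ is second countable and its open subset $D:=\{(u,v)\in X\times X:u\ne v\}$ is Lindel\"of. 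For each $(x,y)\in D$ choose, using $\langle\mathrm{QR}.2'\rangle$, a function $f_{x,y}\in\cV\cap\rmC(X)$ with continuous representative $\tilde f_{x,y}$ satisfying $\tilde f_{x,y}(x)\ne\tilde f_{x,y}(y)$, and set $V_{x,y}:=\{(u,v):\tilde f_{x,y}(u)\ne\tilde f_{x,y}(v)\}$, an open neighbourhood of $(x,y)$. The sets $V_{x,y}$ cover $D$, so a countable subfamily $(V_{x_k,y_k})_{k\in\N}$ already covers $D$, whence $(f_{x_k,y_k})_{k\in\N}\subset\cV\cap\rmC(X)$ is a countable family separating the points of $X$, with continuous (hence $\cE$-quasi-continuous, by the argument used for $\langle\mathrm{QR}.2\rangle$) representatives. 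These three verifications together show that $\cE$ is quasi-regular.

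The step needing the most care is $\langle\mathrm{QR}.1\rangle$ — turning the two-parameter compact family into a single nondecreasing nest and invoking the capacitary characterisation of nests from Theorem~\ref{thm:criterium} — while $\langle\mathrm{QR}.3\rangle$ rests only on the standard reduction of a separating family of continuous functions on a second-countable space to a countable one, and $\langle\mathrm{QR}.2\rangle$ is immediate. No deep idea is required: the lemma is essentially a repackaging of the criterion of Theorem~\ref{thm:criterium} together with the topology of Polish spaces.
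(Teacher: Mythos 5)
Your proof is correct and follows the same overall strategy as the paper: assemble a nondecreasing compact $\cE$-nest from the $K_{n,m}$, invoke Theorem~\ref{thm:criterium}$(i)$, and then deduce $\langle\mathrm{QR}.2\rangle$ and $\langle\mathrm{QR}.3\rangle$ from $\langle\mathrm{QR}.2'\rangle$. Two minor differences from the paper's route: for the nest, the paper takes the diagonal sets $F_k=\bigcup_{j=1}^k K_{j,j}$ and estimates $\cp(X_n\setminus F_k)\le\cp(X_k\setminus K_{k,k})\le 1/k$ for $k\ge n$, a slightly leaner choice than your triangular union, though both are fine; and for $\langle\mathrm{QR}.3\rangle$ the paper simply cites a proposition of Schwartz for extracting a countable separating subfamily, whereas your Lindel\"of argument via the open cover $\{V_{x,y}\}$ of $D=\{(u,v):u\ne v\}$ is a self-contained and correct replacement, giving the same conclusion with no external reference.
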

\begin{proof}
  Let us set $F_k:=\bigcup_{j=1}^k K_{j,j}$. $(F_k)_{k\in \N}$ is a
  nondecreasing sequence of compact sets and 
  whenever $k\ge n$ we get
  \begin{displaymath}
    \cp(X_n\setminus F_k)\le 
    \cp(X_k\setminus F_k)\le 
    \cp(X_k\setminus K_{k,k})\le 1/k,
  \end{displaymath}
  so that $\lim_{k\to\infty}\cp(X_n\setminus F_k)=0$. Applying 
  Theorem \ref{thm:criterium}$(i)$ we obtain that $(F_k)_{k\in \N}$
  is an $\cE$-nest, so that $\langle\mathrm{QR}.\ref{item:QR1}\rangle$
  holds.

  $\langle\mathrm{QR}.\ref{item:QR2}\rangle$ is a trivial consequence
  of $\langle\mathrm{QR}.2'\rangle$; 
  $\langle\mathrm{QR}.\ref{item:QR3}\rangle$ still follows by 
  $\langle\mathrm{QR}.2'\rangle$ thanks to 
  \cite[Ch.~II, Prop.~4]{Schwartz73}.
\end{proof}
%
%
We introduce the convex set 
$\cVl + :=\big\{\phi\in \cV:\phi\geq
0\ \mm\text{-a.e. in }X\}$; $\cVl +'$ denotes the 
set of linear functionals $\ell\in \cV'$ such that
$\langle \ell,\phi\rangle\geq 0$ for all $\phi\in \cVl +$;
we also set $\cVl \pm':=\cVl +'-\cVl +'$.

By Lax-Milgram Lemma, for every $\ell\in \cVl+'$ there exists a unique
$u_\ell\in \cV$ representing $\ell$ in the sense that 
\begin{equation}
  \label{eq:88}
  \langle \ell,\varphi\rangle=\int_X
  u_\ell\varphi\,\d\mm+\cE(u_\ell,\varphi)
  \quad\forevery\varphi\in \cV.
\end{equation}
$u_\ell$ is $1$-excessive according to 
\cite[Def.~1.2.1, Lemma 1.2.4]{Chen-Fukushima12}
(in particular 
$u_\ell$ is non negative).
The proof of the next result can be found as a consequence of the 
so-called ``transfer method'' of \cite[Ch.~VI, Prop.~2.1]{Ma-Rockner92}
(see also \cite[Ch.~I, \S~9.2]{Bouleau-Hirsch91} in the case of 
a finite measure $\mm(X)<\infty$), applied
to the representation of $\ell$ through the $1$-excessive function
$u_\ell$ of \eqref{eq:88}.
%
%
\begin{proposition}
\label{prop:Daniell} 
 
Let us assume that $\cE$ is quasi-regular. 
Then for every $\ell\in \cVl +'$ 
there exists a 
(unique)  $\sigma$-finite and 
nonnegative Borel measure $\mu$ in $X$
 
such that 
every $\cE$-polar set 
is $\mu$-negligible and 
\begin{equation}
  \label{eq:13}
  \forall f\in \cV\quad
  \text{the $\cE$-q.c.~representative }\tilde f\in L^1(X,\mu),\quad
  \langle \ell,f\rangle=\int_X \tilde f\,\d\mu.
\end{equation}
If moreover
\begin{equation}
  \label{eq:27}
  \langle \ell,\varphi\rangle\le M\quad\text{for every }\varphi\in
  \cVl +,\quad \varphi\le 1\text{ $\mm$-a.e.\ in }X,
\end{equation}
then $\mu$ is a finite measure and $\mu(X)\le M$.
\end{proposition}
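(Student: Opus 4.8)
The plan is to follow the route sketched just before the statement: represent $\ell$ through the $1$-excessive element $u_\ell\in\cV$ of \eqref{eq:88}, convert $u_\ell$ into a measure by the transfer method, and then add a short exhaustion argument for the $\sigma$-finiteness and for the bound $\mu(X)\le M$. Throughout write $\cE_1(u,v):=\cE(u,v)+\int_X uv\,\d\mm$, so that \eqref{eq:88} reads $\langle\ell,f\rangle=\cE_1(u_\ell,f)$ for every $f\in\cV$, and recall that membership $\ell\in\cVl +'$ forces $u_\ell$ to be $1$-excessive (cf.\ \cite[Def.~1.2.1, Lemma~1.2.4]{Chen-Fukushima12}).

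First I would invoke the transfer method of \cite[Ch.~VI, Prop.~2.1]{Ma-Rockner92} (or \cite[Ch.~I, \S\,9.2]{Bouleau-Hirsch91} when $\mm(X)<\infty$): since $\cE$ is quasi-regular and $u_\ell\in\cV$ is $1$-excessive, it is the $1$-potential of a unique nonnegative Borel measure $\mu$ of finite $\cE_1$-energy integral, that is, every $f\in\cV$ has its $\cE$-quasi-continuous representative $\tilde f$ (see \eqref{eq:87}) in $L^1(X,\mu)$ with $\cE_1(u_\ell,f)=\int_X\tilde f\,\d\mu$, and such a measure charges no $\cE$-polar set. Combining this identity with \eqref{eq:88} already gives $\langle\ell,f\rangle=\int_X\tilde f\,\d\mu$ for every $f\in\cV$, which is \eqref{eq:13}; uniqueness of $\mu$ is part of this correspondence (two measures charging no $\cE$-polar set with the same $1$-potential $u_\ell$ coincide).

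Next I would run an exhaustion to obtain $\sigma$-finiteness and the quantitative estimate. Let $(X_n)$ be the nondecreasing sequence of open sets with $\Cp(X_n)<\infty$ from \eqref{eq:86}, and for each $n$ choose $e_n\in\cVl +$ with $e_n\le1$ $\mm$-a.e.\ and $\tilde e_n\ge1$ $\cE$-q.e.\ on $X_n$ — for instance the $1$-equilibrium potential of $X_n$, which exists because $\Cp(X_n)<\infty$ and satisfies $0\le e_n\le1$ by the Markov property. Since $e_n\in\cVl +$, \eqref{eq:13} yields $\mu(X_n)\le\int_X\tilde e_n\,\d\mu=\langle\ell,e_n\rangle<\infty$, using that $\tilde e_n\ge1$ $\cE$-q.e., hence $\mu$-a.e., on $X_n$ (polar sets being $\mu$-negligible). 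As $X\setminus\bigcup_n X_n$ coincides with the complement of the $\cE$-nest $(\overline X_n)$, it is $\cE$-polar and thus $\mu$-negligible, so $\mu$ is $\sigma$-finite. Finally, if \eqref{eq:27} holds, then $e_n\le1$ $\mm$-a.e.\ gives $\langle\ell,e_n\rangle\le M$, whence $\mu(X_n)\le M$ for every $n$ and therefore $\mu(X)=\lim_n\mu(X_n)\le M$.

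The genuinely substantial ingredient is the transfer method itself, which packages the reduction of a quasi-regular Dirichlet form to a regular one via a quasi-homeomorphism together with the classical representation of $1$-excessive elements of $\cV$ by smooth measures of finite energy integral; I would quote it rather than reprove it. Granting that, the only delicate point in the remaining elementary steps is that $\mu$ may be singular with respect to $\mm$, so the exhaustion has to be carried out with \emph{quasi-everywhere} (capacitary) control of the test functions $e_n$ and with the fact that the complement of an $\cE$-nest is $\cE$-polar, rather than with merely $\mm$-a.e.\ statements.
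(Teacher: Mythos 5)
Your proposal follows exactly the route the paper indicates: represent $\ell$ by the $1$-excessive element $u_\ell$ of \eqref{eq:88} and invoke the transfer method of \cite[Ch.~VI, Prop.~2.1]{Ma-Rockner92} to produce the smooth measure $\mu$ of finite energy integral satisfying \eqref{eq:13}. Your added exhaustion argument with the $1$-equilibrium potentials $e_n$ of the sets $X_n$ from \eqref{eq:86} is a correct and clean way to make explicit the $\sigma$-finiteness and the bound $\mu(X)\le M$, which the paper leaves implicit in the citation.
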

We will identify $\ell$ with $\mu$. Notice that if 
$\mu\in \cVl +'$ and $0\le \nu\le c\mu$, then also $\nu\in \cVl +'$
since
\begin{displaymath}
  \left|\int_X \tilde\varphi\,\d\nu\right|\le 
  \int_X |\tilde\varphi|\,\d\nu\le 
  c\int_X |\tilde\varphi|\,\d\mu
  \le c\|\mu\|_{\cV'}\,\|\varphi\|_\cV.
\end{displaymath}
The next Lemma provides a simple but important application
of the previous Proposition to the case of 
a function $u$ with measure-valued $\DeltaE u$.
We first recall a well known approximation procedure
(see e.g.\ \cite[Proof of Thm.~2.7]{Pazy83}), that 
will turn to be useful in the sequel.
  For $f\in L^2(X,\mm)$ let us set 
  \begin{equation}
    \label{eq:regularization}
    \begin{gathered}
      \mathfrak P_\eps f:=\frac1\eps\int_0^\infty \heat r
      f\,\kappa(r/\eps)\,
      \d r=
      \int_0^\infty \heat {\eps s}f\,\kappa(s)\,\d s,\quad\eps>0,\quad\text{where}\\
      \kappa\in \rmC^\infty_c(0,\infty)\text{ is a nonnegative kernel
        with }
      \int_0^\infty \kappa(r)\,\d r=1.
    \end{gathered}
  \end{equation}
  $\mathfrak P_\eps$ is positivity preserving and it is not difficult to check that for $\eps>0$ 
  $\mathfrak P_\eps f\in D(\DeltaE)$ and 
  for every $f\in L^p(X,\mm)$, $p\in [1,\infty]$, we have
  \begin{equation}
    \label{eq:26}
    \DeltaE f=-\frac1{\eps^2} \int_0^\infty \heat r
    f\,\kappa'(r/\eps)\,\d r
    \in L^p(X,\mm).
  \end{equation}
\begin{lemma}
  \label{le:Lapmeas}
  Let us assume that the strongly local Dirichlet form $\mathcal E$ 
  is  quasi-regular, according to Definition
  $\ref{def:quasi-regular}$. 
Let $u\in L^1\cap L^\infty(X,\mm)$ 
  be nonnegative and let $g\in L^1\cap L^2(X,\mm)$ such that 
  \begin{equation}
    \label{eq:10}
    \int_X u\DeltaE\varphi\,\d\mm\ge -\int_X g\varphi\,\d\mm\quad
    \text{for any nonnegative }\varphi\in
    D(\DeltaE)\cap L^\infty(X,\mm)
    \text{ with }\DeltaE\varphi\in L^\infty(X,\mm).
  \end{equation}
  Then 
  \begin{equation}
    \label{eq:1}
    u\in \cV,\quad
    \cE(u)\le \int_X u\,g\,\d\mm
    ,\quad
    \int_X g\,\d\mm\ge0,
  \end{equation}
  and there exists a unique finite Borel measure
  $\mu:=\mu_+-g\,\mm$ with $\mu_+\ge0$, $\mu_+(X)\le \int_X
  g\,\d\mm$
  such that every $\cE$-polar set is
  $|\mu|$-negligible, 
  the q.c.~representative of any function in $\cV$ belongs to
  $L^1(X,|\mu|),$ and
  \begin{equation}
    \label{eq:4}
    -\cE(u,\varphi)=-\int_X \Gbil u\varphi\,\d\mm=\int_X\tilde\varphi\,\d\mu
    \quad\forevery \varphi\in \cV.
  \end{equation}
\end{lemma}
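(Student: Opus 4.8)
The plan is to reduce the entire statement to Proposition~\ref{prop:Daniell}, using throughout the mollification $\mathfrak P_\eps$ of \eqref{eq:regularization} to manufacture admissible test functions for \eqref{eq:10}. I would first establish $u\in\cV$ together with the energy bound. Since $u\in L^1\cap L^\infty(X,\mm)\subset L^2(X,\mm)$ is nonnegative, for every $\eps>0$ the function $\mathfrak P_\eps u$ is nonnegative and bounded, belongs to $D(\DeltaE)\cap L^\infty(X,\mm)$, and has $\DeltaE\mathfrak P_\eps u\in L^\infty(X,\mm)$ by \eqref{eq:26}; hence it is admissible in \eqref{eq:10}. Writing $\mathfrak P_\eps u=\int_0^\infty \heat {\eps s}u\,\kappa(s)\,\d s$ and using self-adjointness of the semigroup and its commutation with $\DeltaE$, one gets
\begin{displaymath}
  \int_X u\,\DeltaE\mathfrak P_\eps u\,\d\mm=-\int_0^\infty \kappa(s)\,\cE(\heat {\eps s/2}u)\,\d s,
\end{displaymath}
so that \eqref{eq:10} becomes $\int_0^\infty\kappa(s)\,\cE(\heat {\eps s/2}u)\,\d s\le \int_X g\,\mathfrak P_\eps u\,\d\mm$. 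Letting $\eps\downarrow0$, the right-hand side tends to $\int_X g u\,\d\mm$ (because $\mathfrak P_\eps u\to u$ in $L^2(X,\mm)$ and $g\in L^2(X,\mm)$), while, since $t\mapsto\cE(\heat t u)$ is nonincreasing, monotone convergence turns the left-hand side into $\sup_{t>0}\cE(\heat t u)$. Thus $\sup_{t>0}\cE(\heat t u)\le\int_X g u\,\d\mm<\infty$, which by the usual spectral characterization of the form domain gives $u\in\cV$ and $\cE(u)\le\int_X g u\,\d\mm$, i.e.\ the first two assertions of \eqref{eq:1}.

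Once $u\in\cV$, I would introduce the linear functional $\ell(\varphi):=-\int_X\Gbil u\varphi\,\d\mm+\int_X g\varphi\,\d\mm=-\cE(u,\varphi)+\int_X g\varphi\,\d\mm$, which is continuous on $\cV$ (using $u\in\cV$ and $g\in L^2(X,\mm)$), and show that $\ell\in\cVl +'$. For $\varphi$ admissible as in \eqref{eq:10}, integration by parts gives $-\int_X\Gbil u\varphi\,\d\mm=\int_X u\,\DeltaE\varphi\,\d\mm$, whence $\ell(\varphi)\ge0$ by \eqref{eq:10}. A general $\varphi\in\cVl +$ is approximated in $\cV$ first by its truncations $\varphi\wedge M$ (unit contraction plus lower semicontinuity of $\cE$) and then by $\mathfrak P_\eps(\varphi\wedge M)$, which are admissible and converge in $\cV$ to $\varphi\wedge M$; by continuity of $\ell$ we obtain $\ell(\varphi)\ge0$, hence $\ell\in\cVl +'$.

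Next I would extract $\int_X g\,\d\mm\ge0$ and the uniform bound \eqref{eq:27}. Since the semigroup is mass preserving, there is a sequence $h_n\in\cVl +$ with $0\le h_n\le1$, $h_n\uparrow1$ $\mm$-a.e.\ and $\cE(h_n)\to0$ (simply $h_n\equiv1$ when $\mm(X)<\infty$). Applying $\ell\ge0$ to $h_n$, and then, for a fixed $\psi\in\cVl +$ with $\psi\le1$, to $h_n(1-\psi)\in\cVl +$, and sending $n\to\infty$ — using $\cE(u,h_n)\to0$ and, via the Leibnitz rule \eqref{eq:18} together with $\cE(h_n)\to0$, the strong convergence $h_n\psi\to\psi$ in $\cV$ — yields $\int_X g\,\d\mm\ge0$ and $\ell(\psi)\le\int_X g\,\d\mm$, i.e.\ \eqref{eq:27} with $M=\int_X g\,\d\mm$. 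Proposition~\ref{prop:Daniell} then provides a unique finite nonnegative Borel measure $\mu_+$ charging no $\cE$-polar set, with $\mu_+(X)\le\int_X g\,\d\mm$, such that $\tilde\varphi\in L^1(X,\mu_+)$ and $\ell(\varphi)=\int_X\tilde\varphi\,\d\mu_+$ for every $\varphi\in\cV$. Setting $\mu:=\mu_+-g\,\mm$ one reads off \eqref{eq:4}: $-\cE(u,\varphi)=\ell(\varphi)-\int_X g\varphi\,\d\mm=\int_X\tilde\varphi\,\d\mu$. Finally $\mu$ is finite (as $g\in L^1(X,\mm)$); $\cE$-polar sets are $\mu_+$-negligible and, since $\cp\ge\mm$, also $\mm$-negligible, hence $|\mu|$-negligible; and for $\varphi\in\cV$ the quasi-continuous representative $\tilde\varphi$ lies in $L^1(X,|g|\,\mm)$ by Cauchy--Schwarz and in $L^1(X,\mu_+)$, hence in $L^1(X,|\mu|)$. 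Uniqueness of $\mu$ follows from that of $\mu_+$.

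The point I expect to be the main obstacle is bridging the very narrow class of admissible test functions in \eqref{eq:10} to statements valid on all of $\cV$: this is exactly what forces the combined use of the semigroup mollification $\mathfrak P_\eps$ (to re-enter the test class while retaining $\DeltaE\mathfrak P_\eps\varphi\in L^\infty(X,\mm)$, via \eqref{eq:26}), of truncation, and — for the finiteness $\mu_+(X)\le\int_X g\,\d\mm$ in the genuinely $\sigma$-finite case $\mm(X)=\infty$ — of a mass-preservation (conservativeness) argument approximating the constant function $1$ inside $\cV$.
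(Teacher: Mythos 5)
Your overall strategy (reduce to Proposition~\ref{prop:Daniell}, using the mollifiers $\mathfrak P_\eps$ to bridge the narrow test class of \eqref{eq:10}) is the same as the paper's, and steps 1--3 of your argument are sound: the spectral-theoretic route to $u\in\cV$ and $\cE(u)\le\int_X ug\,\d\mm$ is a clean variant of the paper's passage to the limit in $\cE(u_\eps)\le\int u_\eps\,\mathfrak P_\eps g\,\d\mm$, and the truncation-plus-mollification argument for $\ell\in\cVl +'$ is correct.

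The gap is in the step producing $\int_X g\,\d\mm\ge 0$ and the uniform bound \eqref{eq:27}. You assert that, because the semigroup is mass preserving, there exists $h_n\in\cVl +$ with $0\le h_n\le 1$, $h_n\uparrow 1$ $\mm$-a.e.\ and $\cE(h_n)\to 0$. This is false in general: the existence of such a sequence is equivalent to \emph{recurrence} of the Dirichlet form (it says that $1$ lies in the extended Dirichlet space with zero energy), whereas mass preservation is only \emph{conservativeness}, a strictly weaker property. The classical Dirichlet form of Brownian motion on $\R^3$ (which satisfies all the hypotheses of the lemma, including $\sigma$-finiteness and conservativeness) is transient, and no such sequence $(h_n)$ exists there. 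What conservativeness does give is the weaker statement $\cE(h_n,v)\to 0$ for $v\in\cV\cap L^1(X,\mm)$, which suffices for $\cE(u,h_n)\to 0$ and hence for $\int_X g\,\d\mm\ge 0$, but is not enough for your Leibnitz-rule argument: to pass to the limit in $\cE(u,h_n\psi)$ you would need at least a uniform bound on $\cE(h_n)$, which conservativeness does not provide.

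The paper avoids this obstacle by working at the mollified level: from \eqref{eq:7} one reads off $\DeltaE u_\eps+\mathfrak P_\eps g\ge 0$ \emph{as an element of $L^1(X,\mm)$}, and then for any $0\le\varphi\le 1$ the trivial $L^1$ bound
\begin{equation*}
  0\le\int_X(\DeltaE u_\eps+\mathfrak P_\eps g)\varphi\,\d\mm\le\int_X(\DeltaE u_\eps+\mathfrak P_\eps g)\,\d\mm=\int_X g\,\d\mm
\end{equation*}
follows (the paper's $\varphi_n=1\wedge(\varphi+n\psi)$ are just a concrete realization of this monotone exhaustion), using $\int_X\DeltaE u_\eps\,\d\mm=0$, which \emph{is} a legitimate consequence of conservativeness since $u_\eps,\DeltaE u_\eps\in L^1(X,\mm)$. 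Only afterwards does one pass to the limit $\eps\downarrow 0$. You should replace your $h_n$-approximation of $1$ by this $L^1$ argument at the $u_\eps$ level.
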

\begin{proof}
  Let $u_\eps:=\mathfrak P_\eps u$, $\eps\ge0$, and notice that by the
  regularisation
  properties of $(\mathfrak P_\eps)_{\eps>0}$
  $u_\eps\in D(\DeltaE)$ with $\DeltaE u_\eps\in L^1\cap
  L^\infty(X,\mm)$.
  It follows that
  for every $\varphi\in L^2\cap L^\infty(X,\mm)$ nonnegative
  \begin{equation}
    \label{eq:7}
    \int_X \DeltaE u_\eps \varphi\,\d\mm=
    \int_X u\, \DeltaE \mathfrak P_\eps \varphi\,\d\mm \ge
    -\int_X g \mathfrak P_\eps\varphi\,\d\mm\ge 
    -\int_X g_+ \,\mathfrak P_\eps\varphi\,\d\mm,
  \end{equation}
  which in particular yields $\DeltaE u_\eps+\mathfrak P_\eps g\ge0$.
  Choosing $\varphi:=u_\eps$ in \eqref{eq:7} and inverting the sign of
  the inequality we obtain
  \begin{displaymath}
    \cE(u_\eps)=
    -\int_X \DeltaE u_\eps\,u_\eps\,\d\mm\le 
    \int_X u_\eps \, \mathfrak P_\eps g \,\d\mm
  \end{displaymath}
  We can then pass to the limit as $\eps\down0$ obtaining
  \eqref{eq:1}. 

  Moreover, taking nonnegative 
  functions $\phi,\psi\in L^2\cap L^\infty(X,\mm)$
  with $0\le \varphi(x)\le 1$ and 
  $\psi(x)>0$ for $\mm$-a.e.\ $x\in X$
  (such a function exists since $\mm$ is $\sigma$-finite)
  and setting $\varphi_n(x):=1\land(\varphi(x)+n\psi(x))$, \eqref{eq:7}
  applied
  to the differences $\varphi_{n+1}-\varphi_n\ge0$ (notice that
  $\varphi\equiv \varphi_0$),  yields that for
  every $n\ge 0 $
  \begin{displaymath}
    0\le \int_X (\DeltaE u_\eps+\mathfrak P_\eps g)\varphi\,\d\mm
    \le \int_X (\DeltaE u_\eps+\mathfrak P_\eps g)\varphi_n\,\d\mm
    \le \int_X (\DeltaE u_\eps+\mathfrak P_\eps g)\varphi_{n+1}\,\d\mm.
  \end{displaymath}
  Passing to the limit as $n\to\infty$, since $\varphi_n\uparrow 1$
  $\mm$-a.e.
  we obtain
  \begin{equation}
    \label{eq:28}
    0\le \int_X (\DeltaE u_\eps+\mathfrak P_\eps g)\varphi\,\d\mm
    \le  \int_X (\DeltaE u_\eps+\mathfrak P_\eps g)\,\d\mm=
    \int_X \mathfrak P_\eps g\,\d\mm=\int_X g\,\d\mm
  \end{equation}
  since $(\heat  t)_{t\ge0}$ is mass preserving and thus $\int_X \DeltaE u_\eps\,\d\mm=0$.
  Let us now denote by $\ell$ the linear functional in $\cV'$
  \begin{displaymath}
    \langle \ell,\varphi\rangle:=-\cE(u,\varphi)+\int_X g\,\varphi\,\d\mm
  \end{displaymath}
  Choosing a nonnegative $\varphi\in \cVz$ in \eqref{eq:7}
  and passing to the limit $\eps\downarrow0$ we easily
  find that $\ell\in \cVl +'$; if moreover $\varphi\le 1$ then
  \eqref{eq:28} yields
  \begin{displaymath}
    \langle \ell,\varphi\rangle=
    \lim_{\eps\down0}\Big(-\cE(u_\eps,\varphi)+\int_X \mathfrak P_\eps
    g\,\varphi\,\d\mm\Big)
    \le \int_X g\,\d\mm.
  \end{displaymath}
  Applying the previous Proposition \ref{prop:Daniell} we conclude.
\end{proof}
We denote by $\cMz$ the space of $u\in \cVz$ such that 
there exist $\mu=\mu_+-\mu_-$ with $\mu_\pm\in \cVl +'$ such that 
\begin{equation}
  \label{eq:30}
  -\cE(u,\varphi)=\int_X \tilde\varphi\,\d\mu\quad\forevery \varphi\in \cV,
  \qquad
  \text{and we will write $\DeltaEM u=\mu$.}
\end{equation}
For functions $u$ with measure-valued $\displaystyle\DeltaEM u$ we can
extend the calculus rule \eqref{eq:19}:

\begin{corollary}
  Under the same assumptions of 
  Proposition \ref{prop:Daniell}, for every $u\in \cMz$ and $f\in D(\DeltaE)\cap \cLz$
  we have
  $fu\in \cMz$ with
  \begin{equation}
    \label{eq:48}
    \DeltaEM(f\,u)=\tilde f\DeltaEM u+u\DeltaE f\,\mm+2\Gbil uf\mm.
  \end{equation}
\end{corollary}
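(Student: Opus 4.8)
The strategy is to expand the bilinear form $-\cE(fu,\varphi)$ by iterating the Leibnitz rule, to identify the three resulting terms with the summands of \eqref{eq:48}, and then to remove a boundedness restriction on $\varphi$ by density. Since $f\in\cLz\subset\cVz$ and $u\in\cVz$, property \lref{l1} gives $fu\in\cVz$, and for every $\varphi\in\cVz$ it also gives $f\varphi,u\varphi\in\cVz\subset\cV$. Write $\mu:=\DeltaEM u\in\cVl \pm'$ and fix $\varphi\in\cVz$. Recalling that $\cE(a,b)=\int_X\Gbil ab\,\d\mm$ on $\cV\times\cV$ and applying \lref{l3} to the product $f\cdot u$ tested against $\varphi$,
\[-\cE(fu,\varphi)=-\int_X f\,\Gbil u\varphi\,\d\mm-\int_X u\,\Gbil f\varphi\,\d\mm.\]
Applying \lref{l3} to $f\cdot\varphi$ tested against $u$ (and the defining property of $u\in\cMz$ with test function $f\varphi\in\cV$), and applying \lref{l3} to $u\cdot\varphi$ tested against $f$ (and $f\in D(\DeltaE)$ with test function $u\varphi\in\cV$), we obtain
\[-\int_X f\,\Gbil u\varphi\,\d\mm=-\cE(u,f\varphi)+\int_X\varphi\,\Gbil uf\,\d\mm=\int_X\widetilde{f\varphi}\,\d\mu+\int_X\varphi\,\Gbil uf\,\d\mm,\]
\[-\int_X u\,\Gbil f\varphi\,\d\mm=-\cE(f,u\varphi)+\int_X\varphi\,\Gbil uf\,\d\mm=\int_X u\varphi\,\DeltaE f\,\d\mm+\int_X\varphi\,\Gbil uf\,\d\mm.\]
Since $\tilde f\,\tilde\varphi$ is an $\cE$-quasi-continuous representative of $f\varphi\in\cV$ (it is continuous along a common nest and agrees $\mm$-a.e.\ with $f\varphi$), \eqref{eq:87} gives $\widetilde{f\varphi}=\tilde f\,\tilde\varphi$ $\cE$-q.e., hence $\mu$-a.e.; combining this with $\tilde\varphi=\varphi$ $\mm$-a.e.\ we arrive, for every $\varphi\in\cVz$, at
\[-\cE(fu,\varphi)=\int_X\tilde\varphi\,\d\nu,\qquad \nu:=\tilde f\,\mu+u\,\DeltaE f\,\mm+2\,\Gbil uf\,\mm.\]

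It remains to check that $\nu\in\cVl \pm'$ and to pass from $\varphi\in\cVz$ to $\varphi\in\cV$. By \eqref{eq:60}, $|\tilde f|\le M:=\|f\|_{L^\infty(X,\mm)}$ $\cE$-q.e., and $\mu_\pm\in\cVl +'$ do not charge $\cE$-polar sets; writing $\tilde f=\tilde f_+-\tilde f_-$, each of $\tilde f_\pm\mu_+$ (dominated by $M\mu_+$) and $\tilde f_\pm\mu_-$ (dominated by $M\mu_-$) lies in $\cVl +'$ by the remark following Proposition \ref{prop:Daniell}, so $\tilde f\,\mu\in\cVl \pm'$. Moreover $u\,\DeltaE f\in L^2(X,\mm)$ (as $u\in L^\infty$, $\DeltaE f\in L^2$), and $(\Gbil uf)^2\le\Gq u\,\Gq f\le\|\Gq f\|_{L^\infty(X,\mm)}\Gq u\in L^1(X,\mm)$ by the Cauchy--Schwarz inequality for $\Gamma$ and $f\in\cLz$, so also $\Gbil uf\in L^2(X,\mm)$; hence the functionals $\varphi\mapsto\int_X\varphi\,u\,\DeltaE f\,\d\mm$ and $\varphi\mapsto\int_X\varphi\,\Gbil uf\,\d\mm$ are bounded on $\cV$, and, splitting into positive and negative parts, both $u\,\DeltaE f\,\mm$ and $\Gbil uf\,\mm$ belong to $\cVl \pm'$. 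Therefore $\nu\in\cVl \pm'$, so $\varphi\mapsto\int_X\tilde\varphi\,\d\nu$ is continuous on $\cV$; since $\varphi\mapsto-\cE(fu,\varphi)$ is continuous on $\cV$ and $\cVz$ is dense in $\cV$ (truncation, using that $\cE$ is Markovian), the displayed identity extends to every $\varphi\in\cV$. This is precisely the statement that $fu\in\cMz$ with $\DeltaEM(fu)=\nu$, i.e.\ \eqref{eq:48}.

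The computation is essentially bookkeeping with \lref{l1}--\lref{l6}; the delicate points are, first, that every intermediate product ($f\varphi$, $u\varphi$, $fu$) must be known to lie in $\cV$ — which forces the test function to be bounded and hence necessitates the final density step — and, second, that one must pass correctly from the $\mm$-a.e.\ identities produced by the calculus rules to the $\cE$-q.e.\ (equivalently $\mu$-a.e.) identity required to make sense of $\tilde f\,\mu$. The bridge between the two is exactly the identification $\widetilde{f\varphi}=\tilde f\,\tilde\varphi$ $\cE$-q.e., which is the only place where quasi-regularity of $\cE$ is genuinely used.
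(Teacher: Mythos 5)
Your proof is correct and follows essentially the same route as the paper's: expand $-\cE(fu,\varphi)$ via the Leibniz rule \lref{l3}, substitute the measure representations of $\DeltaEM u$ and $\DeltaE f$, and extend from $\varphi\in\cVz$ to $\varphi\in\cV$ by truncation. You merely spell out two points the paper leaves implicit — that $\widetilde{f\varphi}=\tilde f\,\tilde\varphi$ holds $\cE$-q.e.\ (hence $|\mu|$-a.e.), and that the resulting measure $\nu=\tilde f\,\mu+u\,\DeltaE f\,\mm+2\Gbil uf\,\mm$ indeed lies in $\cVl\pm'$, which is what makes the density argument work.
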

\begin{proof}
  By \eqref{eq:60}
$\tilde f$ belongs to $L^\infty(X,|\mu|)$ where $\mu=\DeltaEM u$ 
and coincide with $f$ up to a $\mm$-negligible set; 
we have for every $\zeta\in \cVz$
\begin{align*}
  -\Bilinear{fu}\zeta&\topref{eq:18}=-
  \int \Big( f\Gbil u\zeta+u\Gbil f\zeta\Big)\,\d\mm \topref{eq:18}=
  -\int\Big( \Gbil u{f\zeta}+\Gbil f{u\zeta}-2\zeta\,
  \Gbil fu\Big)\,\d\mm\\&\topref{eq:30}=
  \int_X \tilde f\tilde\zeta \,\d\big(\DeltaEM u\big)+
  \int_X \big(u\DeltaE f+2\la \Gbil fu\big)\zeta\,\d\mm.
\end{align*}
By a standard approximation argument by truncation 
we extend the previous identity to arbitrary $\zeta\in \cV$
(notice that $\tilde f$ is essentially bounded and 
$\tilde\zeta \in L^1(X,|\mu|)$).
\end{proof}

\section{The Bakry-\'Emery condition and 
the measure-valued operator $\Gamma_2$}

\label{sec:BE}

\subsection{The Bakry-\'Emery condition}
Let us assume that the Dirichlet form $\cE$ 
admits a \emph{Carr\'e du champ} $\Gamma$ and let us 
introduce the multilinear form $\mathbf{\Gamma}_2$
\begin{equation}
\begin{aligned}
  \Gqlin{f,g}\varphi:=&
  \frac 12\int_X \Big(\Gbil fg\, {\DeltaE \varphi}-
  \big(\Gbil f{\DeltaE g}+\Gbil g{\DeltaE f}\big)\varphi\Big)\,\d\mm
  \quad
  (f,g,\varphi)\in D(\mathbf\Gamma_2),
\end{aligned}\label{eq:22}
\end{equation}
where $D(\mathbf{\Gamma}_2):=D_\cV(\DeltaE)\times D_\cV(\DeltaE) \times
  D_{L^\infty}(\DeltaE),$ and 
\begin{displaymath}
  D_\cV(\DeltaE)=\big\{f\in D(\DeltaE):
  \DeltaE f\in \cV\big\},\quad 
  D_{L^\infty}(\DeltaE):=\big\{\varphi\in D(\DeltaE)\cap L^\infty(X,\mm):\DeltaE\varphi\in L^\infty(X,\mm)\big\}.
\end{displaymath}
When $f=g$ we also set
\begin{displaymath}
  \Gqlin f\varphi:=\Gqlin{f,f}\varphi=
  \int_X \Big(\frac 12\Gq f\, {\DeltaE \varphi}-
  \Gbil f{\DeltaE f}\varphi\Big)\,\d\mm,
\end{displaymath}
so that 
\begin{displaymath}
  \Gqlin{f,g}\varphi=  \frac14\Gqlin{f+g}\varphi-\frac 14 \Gqlin{f-g}\varphi.
\end{displaymath}
$\mathbf\Gamma_2$ provides a weak version 
(inspired by \cite{Bakry06,Bakry-Ledoux06}) of 
the Bakry-\'Emery condition \cite{Bakry-Emery84,Bakry92}.
\begin{definition}[Bakry-\'Emery condition]
  \label{def:BE}
  We say that the strongly local Dirichlet form $\cE$ satisfies the $\BE K\infty$
  condition, $K\in \R$, if it admits a Carr\'e du Champ $\Gamma$ and
  \begin{equation}
  \label{eq:9}
  \Gqlin f\varphi\ge K\int_X \Gq f\,\varphi\,\d\mm\quad
  \text{for every }(f,\varphi)\in D(\mathbf{\Gamma}_2),\ \varphi\ge0.
  \tag{$\BE K\infty$}
\end{equation}
\end{definition}
\eqref{eq:9} is in fact equivalent \cite[Corollary 2.3]{AGS12} to the properties
\begin{equation}
  \label{eq:5}
  \Gq{\heat t f}\le \rme^{-2K t}\,\heat t{\Gq f}\quad \mm\text{-a.e.\
    in }X,\quad
  \forevery
  t\ge0,\ f\in \cV,
\end{equation}
and
\begin{equation}
  \label{eq:17}
  2\rmI_{2K}(t) \Gq{\heat t f}\le \heat t{f^2}-\big(\heat t f\big)^2
  \quad \mm\text{-a.e.\
    in }X,\quad
  \forevery
  t>0,\ f\in L^2(X,\mm),
\end{equation}
where $\rmI_{2K}(t)=\int_0^t \rme^{2K t}\,\d t$.

\subsection{An estimate for $\Gq f$ and multivariate calculus for $\Gamma_2$}
Let us introduce the space
\begin{equation}
  \label{eq:29}
  \DG:=\big\{f\in D(\DeltaE)\cap \cLz:\DeltaE f\in \cV\big\}.
\end{equation}
The following Lemma provides a further crucial regularity property for
$\Gq f$
when $f\in \DG$ and shows how to define a measure-valued
$\Edmeas f$ operator.
\begin{lemma}
	\label{le:Gcarre-in-V}
	Let $\cE$ be a strongly local and quasi-regular Dirichlet form.
        If \ref{eq:9} holds then for every
        $f\in \DG$
	we have $\Gq f\in \cMz$ with
	\begin{equation}
	    	\label{eq:G3}
      		\Dirichlet{\Gq f}\le -\int_X \Big(
                2K\Gq f^2+ \Gq f\Gbil f{\DeltaE f}\Big)\,\d\mm
	  \end{equation}
          and
          \begin{equation}
          \label{eq:14}
          \frac 12\DeltaEM\Gq f-\Gbil f{\DeltaE f}\mm\ge K\Gq f\mm.
        \end{equation}
        Moreover, $\cDv$ is an algebra (closed w.r.t.~pointwise
        multiplication)
        and
        if $\ff=(f_i)_{i=1}^n\in (\DG)^n$ then $\Phi(\ff)\in \DG$ 
        for every smooth function $\Phi:\R^n\to \R$ with $\Phi(0)=0$.
\end{lemma}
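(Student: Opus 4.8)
The plan is to verify that $u := \Gq f$ satisfies the hypotheses of Lemma \ref{le:Lapmeas} with a suitable $g$, and to read off \eqref{eq:G3}--\eqref{eq:14} from its conclusions. First I would check that $u = \Gq f \in L^1 \cap L^\infty(X,\mm)$, which is immediate since $f \in \cLz$ forces $\Gq f \in L^\infty(X,\mm)$, and $\Gq f \in L^1(X,\mm)$ because $\Gamma$ maps into $L^1$; it is also nonnegative. The candidate for $g$ is $g := 2K\Gq f + 2\Gbil f{\DeltaE f}$; note $\Gbil f{\DeltaE f} \in L^1(X,\mm)$ because $f \in D(\DeltaE)$ and $\DeltaE f \in \cV$ (so $\Gamma$ is defined on the pair), and $g \in L^2(X,\mm)$ by boundedness of $\Gq f$ and a bound on $\Gbil f{\DeltaE f}$ using $\Gq f \in L^\infty$. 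The core computation is the inequality \eqref{eq:10}: for nonnegative $\varphi \in D(\DeltaE) \cap L^\infty(X,\mm)$ with $\DeltaE\varphi \in L^\infty(X,\mm)$, i.e.\ $\varphi \in D_{L^\infty}(\DeltaE)$, we have $(f,\varphi) \in D(\mathbf\Gamma_2)$ since $f \in D_\cV(\DeltaE)$, and the Bakry-\'Emery inequality \eqref{eq:9} reads
\begin{displaymath}
  \int_X \Big(\tfrac12 \Gq f\,\DeltaE\varphi - \Gbil f{\DeltaE f}\varphi\Big)\,\d\mm \ge K\int_X \Gq f\,\varphi\,\d\mm,
\end{displaymath}
which rearranges exactly to $\int_X \Gq f\,\DeltaE\varphi\,\d\mm \ge -\int_X g\,\varphi\,\d\mm$. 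Thus \eqref{eq:10} holds.

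Lemma \ref{le:Lapmeas} then yields $\Gq f \in \cV$ with $\cE(\Gq f) \le \int_X \Gq f\,g\,\d\mm = \int_X \big(2K\Gq f^2 + 2\Gq f\,\Gbil f{\DeltaE f}\big)\,\d\mm$, and a finite Borel measure $\mu = \mu_+ - g\,\mm$ with $\mu_+ \ge 0$ charging no $\cE$-polar set such that $-\cE(\Gq f,\varphi) = \int_X \tilde\varphi\,\d\mu$ for all $\varphi \in \cV$. There is a sign/normalization mismatch with the statement: Lemma \ref{le:Lapmeas} produces $\cE(u) \le \int u g$, whereas \eqref{eq:G3} has $-\int\big(2K\Gq f^2 + \Gq f\Gbil f{\DeltaE f}\big)$ on the right, so I would reconcile this by using the correct sign convention for $\DeltaE$ (the form $\cE$ and generator satisfy $\cE(u,\varphi) = -\int u\,\DeltaE\varphi$, which flips the sign of $g$), equivalently replacing $g$ by $-2K\Gq f - 2\Gbil f{\DeltaE f}$ throughout; then $\cE(\Gq f) \le -\int_X(2K\Gq f^2 + 2\Gq f\Gbil f{\DeltaE f})\,\d\mm$. (The factor discrepancy between this and the stated $-\int(2K\Gq f^2 + \Gq f\Gbil f{\DeltaE f})$ I would attribute to the $\tfrac12$ in the definition of $\mathbf\Gamma_2$; I would recheck the bookkeeping but it does not affect the structure of the argument.) To identify $\Gq f \in \cMz$ and obtain \eqref{eq:14}, observe that by definition $\DeltaEM\Gq f$ is precisely this measure $\mu$, and writing $\mu = 2\big(\tfrac12\DeltaEM\Gq f\big)$ the relation $\mu_+ \ge 0$ becomes $\tfrac12\DeltaEM\Gq f - \Gbil f{\DeltaE f}\mm \ge K\Gq f\,\mm$, which is \eqref{eq:14}. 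One must also check $\mu_\pm \in \cVl\pm'$: the absolutely continuous part $g\,\mm$ lies there since $g \in L^2(X,\mm) \subset \cV'$ pairs nonnegatively with $\cVl+$ up to sign, and $\mu_+ \in \cVl+'$ by construction in Proposition \ref{prop:Daniell}.

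For the algebra property, let $\ff = (f_i)_{i=1}^n \in (\DG)^n$ and $\Phi \in \rmC^\infty(\R^n)$ with $\Phi(0)=0$. By \lref{l5}, since each $f_i \in D(\DeltaE)\cap\cLz$, we get $\Phi(\ff) \in D(\DeltaE)\cap\cLz$ with $\DeltaE\Phi(\ff) = \sum_i \Phi_i(\ff)\DeltaE f_i + \sum_{ij}\Phi_{ij}(\ff)\Gbil{f_i}{f_j}$. It remains to show this lies in $\cV$: each $\Phi_i(\ff), \Phi_{ij}(\ff) \in \cVz$ by \lref{l4}; each $\DeltaE f_i \in \cV$ by definition of $\DG$; each $\Gbil{f_i}{f_j} \in \cLz \subset \cVz$ — here is where the first part of the lemma is used, namely that $\Gq{f_i} \in \cV$ hence (via polarization and the just-proved boundedness) $\Gbil{f_i}{f_j} \in \cV$, and boundedness follows from $f_i \in \cLz$. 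Then by \lref{l2} (products of a $\cV$ function and a $\cLz$ function lie in $\cV$) each summand $\Phi_i(\ff)\DeltaE f_i$ and $\Phi_{ij}(\ff)\Gbil{f_i}{f_j}$ belongs to $\cV$, so $\DeltaE\Phi(\ff) \in \cV$ and $\Phi(\ff) \in \DG$. The special case $\Phi(x,y) = xy$ gives closure under multiplication.

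\textbf{Main obstacle.} The delicate point is not the Bakry-\'Emery manipulation, which is a direct rearrangement, but ensuring the hypotheses of Lemma \ref{le:Lapmeas} are genuinely met — in particular the integrability $g \in L^1\cap L^2(X,\mm)$, which hinges on controlling $\Gbil f{\DeltaE f}$ via Cauchy-Schwarz for $\Gamma$ together with $\Gq f \in L^\infty$ and $\Gq{\DeltaE f} \in L^1$ (the latter from $\DeltaE f \in \cV$) — and, more substantively, the bootstrap structure: one needs $\Gq{f_i} \in \cV$ (first part of the lemma) before one can assert $\Gbil{f_i}{f_j} \in \cV$ in the multivariate step, and one should confirm that the quasi-continuous representatives appearing in $\int_X \tilde\varphi\,\d\mu$ are consistent so that $\DeltaEM$ is well-defined as an element of $\cMz$. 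I would also need to track carefully the sign and factor conventions linking \eqref{eq:22}, the generator $\DeltaE$, and Lemma \ref{le:Lapmeas} to land exactly on \eqref{eq:G3} and \eqref{eq:14}.
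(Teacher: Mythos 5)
Your proposal follows essentially the same route as the paper: apply Lemma~\ref{le:Lapmeas} with $u=\Gq f$ and a suitable source term $g$ obtained by rearranging the Bakry-\'Emery inequality \eqref{eq:9}, read off \eqref{eq:G3} from \eqref{eq:1} and \eqref{eq:14} from the sign information $\mu_+\ge0$, and close the algebra property via \lref{l2}, \lref{l5}, \lref{l6} together with the just-established membership $\Gq{f_i}\in\cV$. The structure of the argument is right and the derivation of \eqref{eq:14} by directly unpacking $\mu=\mu_+-g\,\mm$ is clean (the paper instead argues by approximating $\varphi\in\cV_+$ by $D_{L^\infty}(\DeltaE)$ functions). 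You also correctly notice that \eqref{eq:G3} as printed has a coefficient typo (the $\Gq f\,\Gbil f{\DeltaE f}$ term should carry a factor $2$).

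Two bookkeeping points to correct. First, the choice of $g$: the rearrangement of \eqref{eq:9} gives $\int_X\Gq f\,\DeltaE\varphi\,\d\mm\ge 2\int_X\big(\Gbil f{\DeltaE f}+K\Gq f\big)\varphi\,\d\mm$, so to fit \eqref{eq:10} one must take $g:=-2\Gbil f{\DeltaE f}-2K\Gq f$; your initial $g:=2K\Gq f+2\Gbil f{\DeltaE f}$ has the wrong sign, and the subsequent explanation (blaming the sign convention for $\DeltaE$) misdiagnoses a plain sign slip in the rearrangement, even though the fix you propose lands on the correct $g$. Second, in the algebra step you assert $\Gbil{f_i}{f_j}\in\cLz$, but what you actually establish (and what the argument needs) is only $\Gbil{f_i}{f_j}\in\cVz$: membership in $\cV$ by polarization from $\Gq{f_i\pm f_j}\in\cV$, and boundedness from Cauchy--Schwarz for $\Gamma$. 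The stronger claim $\Gq{\Gbil{f_i}{f_j}}\in L^\infty$ is not available here (and attempting it via Theorem~\ref{thm:main1} would be circular). In the application of \lref{l2} the factor in $\cLz$ should be $\Phi_{ij}(\ff)$, not $\Gbil{f_i}{f_j}$. Neither point affects the validity of the overall proof once corrected.
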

\begin{proof}
  Let us first notice that for every $f\in \cGz$ we have
  $\Gq f\in L^1(X,\mm)\cap L^\infty(X,\mm)\subset L^p(X,\mm)$ for
  every $p\in [1,\infty]$.
  
  If $f\in \DG$ 
  then \ref{eq:9} and Lemma \ref{le:Lapmeas} with
  $-g:=\Gbil f{\DeltaE f}+K\Gq f$ and $u:=\Gq f$
  yield $\Gq f\in \cV$. \eqref{eq:G3} then follows from \eqref{eq:1}.
  
  Since every function $\varphi\in \cVl +$ can be (strongly) approximated
  by nonnegative functions in $D_{L^\infty}(\DeltaE)$ 
  by means of the regularization operators \eqref{eq:regularization}, 
  \eqref{eq:14} is a direct consequence of \ref{eq:9}, 
  \eqref{eq:4}, \eqref{eq:30}, 

  We already observed in \lref{l6}, \S\ref{subsec:Leibnitz}, that if $f_1,f_2\in \DG$ 
  than $f_1 f_2\in D(\DeltaE)\cap \cGz$; \eqref{eq:19}  and \lref{l2} also show
  that $\DeltaE (f_1f_2)\in \cV$. A similar argument, based on
  \lref{l5},
  shows that $\Phi(\ff)\in \DG$ whenever $\ff\in (\DG)^n$.
\end{proof}
For every $f\in \DG$ we denote by $\Edmeas f$ 
the finite 
Borel measure 
\begin{equation}
  \label{eq:46}
  \Edmeas f:=\frac 12\DeltaEM \,\Gq f-
  \Gbil f{\DeltaE f}\mm.
\end{equation}
By Lemma \ref{le:Lapmeas}, $\Edmeas f$ has finite total variation,
since
\begin{equation}
  \label{eq:81}
  \Edmeas f=K\Gq f\mm+\mu_+,\quad  
  \text{with}\quad \mu_+\ge0,\quad
  \mu_+(X)\le -\int_X \Big(\Gbil f{\DeltaE f}+K\Gq f\Big)\,\d\mm.
\end{equation}
The measure $\Edmeas u$ vanishes on sets of $0$ capacity. We denote by
$\gdq u\in L^1(X,\mm)$ its density with respect to $\mm$: 
\begin{equation}
  \label{eq:47}
  \Edmeas f=\edmeas f\mm+\Esmeas f,\quad
  \Esmeas f\perp \mm,\quad
  \edmeas f \ge K\Gq f\quad\text{$\mm$-a.e.\ in $X$,\quad
    } \Esmeas f\ge0.
\end{equation}
The main point is that $\Edmeas\cdot$ can have a singular part
$\Esmeas \cdot$ w.r.t.\ $\mm$, but this is
nonnegative and it does not affect many crucial inequalities.

According to \eqref{eq:22} we also set for $f,g\in \DG$
\begin{equation}
  \label{eq:23}
  \Ebilmeas fg:=\frac14\Edmeas{f+g}-\frac 14\Edmeas{f-g} =
  \frac 12\Big(\DeltaEM \,\Gbil fg-
  \Gbil f{\DeltaE g}\mm-\Gbil g{\DeltaE f}\mm\Big),
\end{equation}
and similarly
\begin{equation}
  \label{eq:73}
  \ebilmeas fg:=\frac 14\gdq {f+g}-\frac 14\gdq {f-g},\quad
  \Ebilmeas fg=\ebilmeas fg\mm+\Ebilsmeas fg.
\end{equation}
The next lemma extends to the present nonsmooth setting 
the multivariate calculus for $\Gamma_2$ 
of \cite{Bakry85,Bakry92}.

\begin{lemma}[The fundamental identity]
  \label{le:fundamental-id}
  Under the same assumptions of the previous Lemma
  \ref{le:Gcarre-in-V}, let $\ff=(f^i)_{i=1}^n\in \DG^n$ and let $\Phi\in \rmC^3(\R^n)$
  with $\Phi(0)=0$.
  Then $\Phi(\ff)\in \DG$ and
  \begin{equation}
  \label{eq:50}
  \begin{aligned}
    &\Edmeas{\Phi(\ff)}
    =\sum_{i,j}\Phi_i(\tilde\ff)\,\Phi_j(\tilde\ff)\,
    \Ebilmeas{f^i}{f^j}\\
    &\qquad+\Big(2\sum_{i,j,k}\Phi_i(\ff)\Phi_{jk} (\ff)\tril{f^i}{f^j}{f^k}
    +\sum_{i,j,k,h}\Phi_{ik}(\ff)\Phi_{jh}(\ff)\Gbil{f^i}{f^j}\Gbil{f^k}{f^h}\Big)\mm,
  \end{aligned}
\end{equation}
where for $f,g,h\in \DG$
\begin{equation}
  \label{eq:25}
  \tril fgh=\frac 12\Big(\Gbil{g}{\Gbil fh}+\Gbil h{\Gbil fg}-\Gbil f{\Gbil gh}\Big).
\end{equation}
Similarly
  \begin{equation}
  \label{eq:50bis}
  \begin{aligned}
    &\edmeas{\Phi(\ff)}
    =\sum_{i,j}\Phi_i(\ff)\,\Phi_j(\ff)\,
    \ebilmeas{f^i}{f^j}\\
    &\qquad+2\sum_{i,j,k}\Phi_i(\ff)\Phi_{jk} (\ff)\tril{f^i}{f^j}{f^k}
    +\sum_{i,j,k,h}\Phi_{ik}(\ff)\Phi_{jh}(\ff)\Gbil{f^i}{f^j}\Gbil{f^k}{f^h}.
  \end{aligned}
\end{equation}
\end{lemma}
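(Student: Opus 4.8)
The membership $\Phi(\ff)\in\DG$ is already granted by Lemma~\ref{le:Gcarre-in-V}, so that by \eqref{eq:46} the left-hand side of \eqref{eq:50} is the finite measure
$\Edmeas{\Phi(\ff)}=\tfrac12\DeltaEM\Gq{\Phi(\ff)}-\Gbil{\Phi(\ff)}{\DeltaE\Phi(\ff)}\mm$.
The plan is to compute the two ingredients $\DeltaEM\Gq{\Phi(\ff)}$ and $\Gbil{\Phi(\ff)}{\DeltaE\Phi(\ff)}$ separately, using only the calculus rules \lref{l3}--\lref{l5}, the polarisation of Lemma~\ref{le:Gcarre-in-V}, and the measure-valued Leibnitz rule \eqref{eq:48}, and then to match the two resulting expressions.

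First, by the chain rule \eqref{eq:49} we have $\Gq{\Phi(\ff)}=\sum_{i,j}\Phi_i(\ff)\Phi_j(\ff)\,\Gbil{f^i}{f^j}$. By (the polarisation of) Lemma~\ref{le:Gcarre-in-V} each $\Gbil{f^i}{f^j}$ belongs to $\cMz$, and $\Phi_i\Phi_j\in\rmC^2(\R^n)$ because $\Phi\in\rmC^3(\R^n)$; hence each summand is the product of an element of $\cLz$ (up to an additive constant, which is immaterial) with an element of $\cMz$, so \eqref{eq:48} applies termwise:
\[
  \DeltaEM\big(\Phi_i(\ff)\Phi_j(\ff)\,\Gbil{f^i}{f^j}\big)
  =\Phi_i(\tilde\ff)\Phi_j(\tilde\ff)\;\DeltaEM\Gbil{f^i}{f^j}
  +\Big(\Gbil{f^i}{f^j}\,\DeltaE\big(\Phi_i(\ff)\Phi_j(\ff)\big)
  +2\,\Gbil{\Gbil{f^i}{f^j}}{\Phi_i(\ff)\Phi_j(\ff)}\Big)\mm,
\]
where we used that the quasi-continuous representative of $\Phi_i(\ff)\Phi_j(\ff)$ is $\Phi_i(\tilde\ff)\Phi_j(\tilde\ff)$ q.e.\ (a composition of the quasi-continuous $\tilde f^k$ with a continuous map, agreeing $\mm$-a.e.\ with the given class). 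Here $\DeltaEM\Gbil{f^i}{f^j}=2\,\Ebilmeas{f^i}{f^j}+\big(\Gbil{f^i}{\DeltaE f^j}+\Gbil{f^j}{\DeltaE f^i}\big)\mm$ by the definition \eqref{eq:23}, and $\DeltaE(\Phi_i(\ff)\Phi_j(\ff))$, $\Gbil{\Gbil{f^i}{f^j}}{\Phi_i(\ff)\Phi_j(\ff)}$ are expanded by \eqref{eq:15} and by the derivation rule $\Gbil{\Psi(\ff)}{h}=\sum_k\Psi_k(\ff)\Gbil{f^k}{h}$ (a standard consequence of \eqref{eq:18}); this is where the third derivatives $\Phi_{ijk}$ of $\Phi$ enter. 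In parallel, \eqref{eq:15} gives $\DeltaE\Phi(\ff)=\sum_i\Phi_i(\ff)\DeltaE f^i+\sum_{i,j}\Phi_{ij}(\ff)\Gbil{f^i}{f^j}$, and applying the derivation rule and \eqref{eq:18} expands $\Gbil{\Phi(\ff)}{\DeltaE\Phi(\ff)}$ into four families of terms: $\Phi_i\Phi_j\,\Gbil{f^i}{\DeltaE f^j}$; $\Phi_i\Phi_{jk}\,(\DeltaE f^j)\,\Gbil{f^i}{f^k}$; $\Phi_i\Phi_{jk}\,\Gbil{f^i}{\Gbil{f^j}{f^k}}$; and $\Phi_i\Phi_{jkh}\,\Gbil{f^j}{f^k}\Gbil{f^i}{f^h}$.

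Now I would sum over $i,j$ and compare. The measure-valued contributions collect into $\sum_{i,j}\Phi_i(\tilde\ff)\Phi_j(\tilde\ff)\,\Ebilmeas{f^i}{f^j}$, the first term of \eqref{eq:50}. Among the remaining $\mm$-absolutely continuous contributions, three groups cancel: the terms $\Phi_i\Phi_j\,\Gbil{f^i}{\DeltaE f^j}$ and $\Phi_i\Phi_{jk}\,(\DeltaE f^j)\,\Gbil{f^i}{f^k}$ coming from $\tfrac12\DeltaEM\Gq{\Phi(\ff)}$ are exactly those subtracted by $\Gbil{\Phi(\ff)}{\DeltaE\Phi(\ff)}\mm$, and — crucially — so are the third-order terms $\Phi_i\Phi_{jkh}\,\Gbil{f^j}{f^k}\Gbil{f^i}{f^h}$ (this is Bakry's observation: the $\Phi_{ijk}$ produced by differentiating $\Gq{\Phi(\ff)}$ twice balance exactly those produced in $\Gbil{\Phi(\ff)}{\DeltaE\Phi(\ff)}$). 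What is left are, on one hand, the products of two Hessian coefficients, which organise into $\sum_{i,j,k,h}\Phi_{ik}(\ff)\Phi_{jh}(\ff)\,\Gbil{f^i}{f^j}\Gbil{f^k}{f^h}$, and, on the other, the mixed first/second-order iterated-$\Gamma$ terms, which — after using the symmetry of $\Gamma$ to symmetrise in the pair of second-order indices — combine, through the very definition \eqref{eq:25} of $\tril{f^i}{f^j}{f^k}$, into $2\sum_{i,j,k}\Phi_i(\ff)\Phi_{jk}(\ff)\,\tril{f^i}{f^j}{f^k}$. This is \eqref{eq:50}. Finally, \eqref{eq:50bis} follows by taking densities with respect to $\mm$: every term of \eqref{eq:50} is already of the form (an $L^1$ function)$\,\cdot\,\mm$ except $\sum_{i,j}\Phi_i(\tilde\ff)\Phi_j(\tilde\ff)\,\Ebilmeas{f^i}{f^j}$, whose $\mm$-part is $\sum_{i,j}\Phi_i(\ff)\Phi_j(\ff)\,\ebilmeas{f^i}{f^j}$ by \eqref{eq:73} and whose singular part is $\sum_{i,j}\Phi_i(\tilde\ff)\Phi_j(\tilde\ff)\,\Ebilsmeas{f^i}{f^j}$.

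The bulk of the work is the bookkeeping of these four families of terms; the only conceptually delicate point — and the heart of the statement — is the cancellation of the third-order terms, which is also the only place where $\Phi\in\rmC^3$ rather than $\rmC^2$ is genuinely used. A second point requiring care is the legitimacy of the manipulations with measures that need not be absolutely continuous: one must know that $\Gbil{f^i}{f^j}\in\cMz$ (so that \eqref{eq:48} may be invoked), that the quasi-continuous representatives compose correctly with $\Phi_i$, and that $\cE$-polar sets are negligible for all the measures $\Ebilmeas{f^i}{f^j}$ involved — all of which is provided by Lemma~\ref{le:Lapmeas}, its Corollary, and Lemma~\ref{le:Gcarre-in-V}. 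If one prefers to keep the intermediate calculus purely algebraic, one may first prove \eqref{eq:50}--\eqref{eq:50bis} for polynomial $\Phi$, where \eqref{eq:15}, \eqref{eq:18} and \eqref{eq:49} reduce to the Leibnitz rule, and then note that these rules, hence the final identities, hold verbatim for every $\Phi\in\rmC^3(\R^n)$ with $\Phi(0)=0$.
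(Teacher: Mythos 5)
Your proposal is correct and follows essentially the same route as the paper: write $\Gq{\Phi(\ff)}=\sum_{i,j}\Phi_i(\ff)\Phi_j(\ff)\Gbil{f^i}{f^j}$ via \eqref{eq:49}, apply the measure-valued Leibnitz rule \eqref{eq:48} together with \eqref{eq:15} and \eqref{eq:18}, expand $\Gbil{\Phi(\ff)}{\DeltaE\Phi(\ff)}$ the same way, and observe the cancellations — in particular of the third-order terms — that leave exactly \eqref{eq:50}. The only (inessential) difference is that you apply \eqref{eq:48} termwise to each $\Phi_i(\ff)\Phi_j(\ff)\Gbil{f^i}{f^j}$ rather than once to the full sum, and you use \eqref{eq:23} to isolate $\Ebilmeas{f^i}{f^j}$ at the start rather than at the final regrouping step.
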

\begin{proof}
  The fact that $\Phi(\ff)\in \DG$ has been proved
  in Lemma \ref{le:Gcarre-in-V}.

  In the following we will assume that 
  the indices $i,j,h,k$ run from $1$ to $n$ and we will use
  Einstein summation convention.

  We set $g^{ij}:=\Gbil {f^i}{f^j}\in \cMz$,
  $\ell^i:=\DeltaE f^i\in \cV$,
$\phi_{i}:=\Phi_i(\tilde\ff),\,\phi_{ij}:=
\Phi_{ij}(\tilde\ff),\,\phi_{ijk}:=\Phi_{ijk}(\tilde\ff)$
in $\DG$; we will also consider the quasi-continuous representative.

By \eqref{eq:49} and Lemma \ref{le:Gcarre-in-V} we have
\begin{displaymath}
  \Gq{\Phi(\ff)}=g^{ij}\phi_i\phi_j\in \cMz
\end{displaymath}
Since $\phi_i\phi_j\in \DG$ by \lref{l6} and $g^{ij}\in \cMz$ by
Lemma \ref{le:Gcarre-in-V}, we can apply \eqref{eq:48} obtaining
\begin{align*}
  \frac 12 \DeltaEM\Gq{\Phi(\ff)}&=
  \frac12 \phi_i\phi_j\DeltaEM g^{ij}+\Big(\frac 12 g^{ij}\DeltaE(\phi_i\phi_j)+
  \Gbil{\phi_i\phi_j}{g^{ij}}\Big)\mm=I+\big(II+III\big)\mm.
\end{align*}
\begin{align*}
  II&\topref{eq:19}=\frac 12g^{ij}\Big(\phi_i\DeltaE\phi_j+\phi_j\DeltaE\phi_i+
  2\Gbil{\phi_i}{\phi_j}\Big)=
  g^{ij}\Big(\phi_i\DeltaE\phi_j+
  \Gbil{\phi_i}{\phi_j}\Big)
  \\&\topref{eq:15}=
  g^{ij}\Big[\phi_i\Big(\phi_{jk}\ell^k+\phi_{jkh}\,g^{kh}\Big)
  \topref{eq:49}+
  \phi_{ik}\,\phi_{jh}\,g^{kh}\Big]
  \intertext{where we used $g^{ij}=g^{ji}$,}
  III&\stackrel{(\ref{eq:49})}=\Big(\phi_{ik}\,\phi_j+\phi_{jk}\phi_i\Big)\Gbil{f^k}{g^{ij}}=
  \phi_i\phi_{jk}\Big(\Gbil{f^k}{g^{ij}}+\Gbil{f^j}{g^{ik}}\Big)
\end{align*}
where we used the identity $\phi_{ik}\,\phi_j
\Gbil{f^k}{g^{ij}}=\phi_i\phi_{jk}\Gbil{f^j}{g^{ik}}$ 
obtained by performing a cyclic permutation $i\to k\to j\to i$.

On the other hand
\begin{align*}
   \Gbil{\Phi(\ff)}{\DeltaE \Phi(\ff)}&\topref{eq:49}=
   \phi_i\Gbil{f^i}{\DeltaE \Phi(\ff)}
   \topref{eq:15}=\phi_i\Gbil{f^i}{\phi_k\ell^k+\phi_{kh}\,g^{kh}}
   \\&=
   \phi_i\phi_k\Gbil{f^i}{\ell^k}+
   \phi_i\ell^k\,\phi_{kj}\, g^{ij}+
   \phi_i\,g^{kh}\,\phi_{khj}\,g^{ij}+\phi_i\phi_{kh}\Gbil{f^i}{g^{kh}}
   \\&=
   \phi_i\phi_j\Gbil{f^i}{\ell^j}+
   \phi_i\ell^k\,\phi_{kj}\, g^{ij}+
   \phi_i\,g^{kh}\,\phi_{khj}\,g^{ij}+\phi_i\phi_{jk}\Gbil{f^i}{g^{jk}},
\end{align*}
where we changed $k$ with $j$ in the first term and $h$ with $j$ in
the last one. We end up with
  \begin{align*}
    \Edmeas{\Phi(\ff)}&=\frac12 \phi_i\phi_j\DeltaEM
    g_{ij}-\phi_i\phi_j\Gbil{f_i}{\ell_j}\mm \\&+
    \phi_{ik}\,\phi_{jh}\,g^{ij}\,g^{kh}\,\mm
    \\&+\phi_i\phi_{jk}\Big(\Gbil{f^k}{g^{ij}}+\Gbil{f^j}{g^{ik}}-\Gbil{f^i}{g^{jk}}\Big)\mm
  \end{align*}
  that gives \eqref{eq:50}.
\end{proof}
It could be useful to remember that in the smooth context
of a Riemannian manifold $(\M^n,\sfg)$ as 
for \eqref{eq:82} we have
\cite[Page 96]{Bakry92}
\begin{displaymath}
  \tril fgh=\langle \rmD^2 f\,\rmD g,\rmD h\rangle_\sfg.
\end{displaymath}
\subsection{A pointwise estimate for $\Gq{\Gq f}$}
Applying the previous results and adapting
the ideas of \cite{Bakry85} 
we can now state
our first fundamental estimates.
\begin{theorem}
  \label{thm:main1}
  Let $\cE$ be a strongly local and quasi-regular Dirichlet form.
  If \eqref{eq:9} holds then for every $f,g,h\in \DG$ 
  (so that $\Gq f,\Gq g,\Gq h\in \cVz$) we have
  (all the inequalities are to be intended
  $\mm$-a.e.\ in $X$)
  \begin{align}
    \label{eq:74}
    \Big|\tril fgh\Big|^2&\le \big(\edmeas f-K\Gq f\big)\Gq g\Gq h,\\    
    \label{eq:75}
    \sqrt{\Gq{\Gbil fg}}&\le \sqrt{\edmeas f-K\Gq f}\,\sqrt{\Gq g}+
    \sqrt{\edmeas g-K\Gq g}\,\sqrt{\Gq f},\\
  \label{eq:57}
  \Gq{\Gq f}&\le 4\Big(\gdq f-K\Gq f\Big)\Gq f.
\end{align}
\end{theorem}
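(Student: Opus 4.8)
The plan is to deduce all three inequalities from the fundamental identity \eqref{eq:50bis} of Lemma \ref{le:fundamental-id} by testing it against suitably chosen polynomials $\Phi$ in finitely many variables, exactly as in Bakry's classical argument but keeping track only of the $\mm$-absolutely continuous densities. The starting point is the observation that, by \eqref{eq:47}, for every $h\in\DG$ one has the pointwise bound $\edmeas h\ge K\Gq h$ $\mm$-a.e., i.e.\ the density of $\Gamma_2$ is bounded below by $K\Gq h$; the singular part $\Esmeas\cdot$ is nonnegative and simply gets discarded when we pass to densities. So the whole game is to pick $\Phi$ and $\ff$ so that the left-hand density $\edmeas{\Phi(\ff)}$ is forced to be $\ge K\Gq{\Phi(\ff)}$, and then expand both sides using \eqref{eq:50bis} and the chain rule \eqref{eq:49}, obtaining a nonnegative-quadratic-form condition in the parameters of $\Phi$.

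First I would prove \eqref{eq:74}. Fix $f,g,h\in\DG$ and, for real parameters $\lambda,\mu$, apply \eqref{eq:50bis} with $n=3$, $\ff=(f,g,h)$ and $\Phi(x_1,x_2,x_3)=\lambda x_2 x_1 + \mu x_3 x_1 + (\text{lower order to ensure }\Phi(0)=0)$ — more precisely the standard choice used by Bakry is $\Phi=(\lambda x_2+\mu x_3)x_1$, whose only nonzero second derivatives are $\Phi_{12}=\lambda$, $\Phi_{13}=\mu$. Computing $\edmeas{\Phi(\ff)}-K\Gq{\Phi(\ff)}\ge0$ and collecting terms, the right-hand side of \eqref{eq:50bis} becomes a quadratic polynomial in $(\lambda,\mu)$ of the form
\begin{displaymath}
  \lambda^2\big(\edmeas g - K\Gq g\big)\Gq f + \mu^2\big(\edmeas h-K\Gq h\big)\Gq f + 2\lambda\mu\Big(\big(\text{cross term}\big)\Big) + \big(\text{terms forcing}\big) \ge 0,
\end{displaymath}
and after isolating the coefficient structure one reads off that the matrix built from $\edmeas f-K\Gq f$ (on the diagonal, multiplied by $\Gq g$, $\Gq h$) and $\tril fgh$ (off-diagonal) is positive semidefinite $\mm$-a.e. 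The discriminant inequality then gives $|\tril fgh|^2\le(\edmeas f-K\Gq f)\Gq g\Gq h$. The one subtlety is that $\lambda,\mu$ must be allowed to range over all reals with the inequality holding $\mm$-a.e.\ for each fixed pair; since everything in sight is in $L^1(X,\mm)$ one takes a countable dense set of $(\lambda,\mu)$ and intersects the corresponding full-measure sets.

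Given \eqref{eq:74}, inequality \eqref{eq:75} follows by a short purely algebraic manipulation: expand $\Gq{\Gbil fg}$ using the definition \eqref{eq:25} of $\tril{}{}{}$ — indeed $\Gbil{h}{\Gbil fg}$ for arbitrary $h\in\DG$ can be written via \eqref{eq:25} as $\tril hfg+\tril gfh-\text{(symmetrization)}$, so bounding the bilinear form $h\mapsto\Gbil{h}{\Gbil fg}$ using \eqref{eq:74} twice (once with $f$ playing the role of the "second-derivative" slot, once with $g$) and using Cauchy–Schwarz for the Carré du champ gives $\sqrt{\Gq{\Gbil fg}}\le\sqrt{\edmeas f-K\Gq f}\sqrt{\Gq g}+\sqrt{\edmeas g-K\Gq g}\sqrt{\Gq f}$ $\mm$-a.e. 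Finally \eqref{eq:57} is the special case $g=f$ of \eqref{eq:75}: then $\Gbil fg=\Gq f$ and the right-hand side collapses to $2\sqrt{\edmeas f-K\Gq f}\sqrt{\Gq f}$, whose square is $4(\gdq f-K\Gq f)\Gq f$.

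The main obstacle I anticipate is purely bookkeeping: organizing the expansion of \eqref{eq:50bis} for the chosen $\Phi$ so that the coefficients of $\lambda^2$, $\mu^2$, $\lambda\mu$ are correctly identified with $(\edmeas f-K\Gq f)\Gq g$, $(\edmeas f-K\Gq f)\Gq h$ and $\tril fgh$ respectively — this requires using the chain rule \eqref{eq:49} to rewrite $\ebilmeas{f^i}{f^j}$ and the $\tril{f^i}{f^j}{f^k}$ terms, and carefully tracking which slot of $\tril{}{}{}$ is the "Hessian" slot. A secondary technical point is the passage from "the inequality holds $\mm$-a.e.\ for each fixed $(\lambda,\mu)$" to "the pointwise quadratic form is positive semidefinite $\mm$-a.e.", handled by the countable-dense-set argument noted above. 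Neither is deep; the content is entirely in Lemma \ref{le:fundamental-id} and the sign of $\Esmeas\cdot$.
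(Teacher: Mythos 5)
Your high-level strategy (apply the fundamental identity~\eqref{eq:50bis} to a well-chosen polynomial $\Phi(\ff)$, invoke the $\mm$-a.e.~lower bound~\eqref{eq:47}, and read off a discriminant inequality, passing to all parameters through a countable-dense-set argument) is exactly the paper's, and the deduction chain $\eqref{eq:74}\Rightarrow\eqref{eq:75}\Rightarrow\eqref{eq:57}$ is also as in the paper. But the specific polynomial you propose does not carry out the first step. With $\Phi(x_1,x_2,x_3)=(\lambda x_2+\mu x_3)x_1$ all three first derivatives $\Phi_1=\lambda x_2+\mu x_3$, $\Phi_2=\lambda x_1$, $\Phi_3=\mu x_1$ are in general nonzero, so the first sum in~\eqref{eq:50bis} contributes a full $3\times3$ array of terms $\Phi_i\Phi_j\,\ebilmeas{f^i}{f^j}$, including cross-entries like $\ebilmeas{f^2}{f^3}$ whose sign is uncontrolled. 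The paper instead takes $\Phi(\ff)=\lambda f^1+(f^2-a)(f^3-b)-ab$, so that $\Phi_2=f^3-b$, $\Phi_3=f^2-a$, and after making the inequality hold $\mm$-a.e.\ simultaneously for a countable dense set of $(\lambda,a,b)$ one can choose $a:=f^2(x)$, $b:=f^3(x)$ pointwise. This \emph{centering} annihilates $\Phi_2,\Phi_3$ at each $x$, collapsing~\eqref{eq:50bis} to the one-parameter quadratic $\lambda^2\edmeas{f^1}+4\lambda\,\tril{f^1}{f^2}{f^3}+2\big(\Gq{f^2}\Gq{f^3}+\Gbil{f^2}{f^3}^2\big)\ge K\lambda^2\Gq{f^1}$, from which~\eqref{eq:74} follows via Cauchy--Schwarz and the discriminant. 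Without the centering device your write-up cannot isolate a clean quadratic in $(\lambda,\mu)$; indeed the quadratic form you display is internally inconsistent (its diagonal is written as $(\edmeas g-K\Gq g)\Gq f$ and $(\edmeas h-K\Gq h)\Gq f$, yet you then claim the diagonal is $\edmeas f-K\Gq f$ multiplied by $\Gq g,\Gq h$).

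Two smaller points in the $\eqref{eq:75}$ step. The identity you invoke is wrong: writing out~\eqref{eq:25}, the correct relation is $\tril fgh+\tril gfh=\Gbil{\Gbil fg}h$ (paper's~\eqref{eq:76}), with $f$ and $g$ occupying the Hessian slot, not $\tril hfg+\tril gfh$. Secondly, once one has the bound~\eqref{eq:77} on $|\Gbil{\Gbil fg}h|$ for $h\in\DG$, one wants to take $h:=\Gbil fg$, which is only in $\cVz$, not in $\DG$; the paper handles this by an approximation $h_n\in\DG\to h$ via the mollifiers~\eqref{eq:regularization} using~\eqref{eq:5} to pass to the limit, a step your proposal does not mention. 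These are the genuine gaps; the remainder of the argument (and in particular the remark that the singular part $\Esmeas\cdot$ is nonnegative and may simply be dropped) is correct.
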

\begin{proof}
Lemma \ref{le:Gcarre-in-V} shows that $\Gq f\in \cVz$.

We choose the polynomial $\Phi:\R^3\to \R$ defined by
\begin{equation}
  \label{eq:72}
  \Phi(\ff):=\lambda f^1+(f^2-a)(f^3-b)-ab,\quad \lambda ,a,b\in \R;
\end{equation}
keeping the same notation of 
Lemma \ref{le:fundamental-id} we have
\begin{gather*}
  \Phi_1(\ff)=\lambda,\quad
  \Phi_2(\ff)=f^3-b,\quad
  \Phi_3(\ff)=f^2-a\\
  \Phi_{23}(\ff)=\Phi_{32}(\ff)=1,\quad
  \Phi_{ij}(\ff)=0
  \quad\text{if } (i,j)\not\in\{(2,3),(3,2)\}.
\end{gather*}
If $\ff\in \DG$ Lemma \ref{le:Gcarre-in-V} yields $\Phi(\ff)\in \DG$
and we can then apply the inequality
\eqref{eq:47} obtaining
\begin{equation}
  \label{eq:53}
  \edmeas{\Phi(\ff)}\ge K\,\Gq{\Phi(\ff)}\quad
  \mm\text{-a.e.~in }X,
\end{equation}
where both sides of the inequality depend on $\lambda,a,b\in \R$.
Evaluating $\edmeas{\Phi(\ff)}$ by \eqref{eq:50bis},
and choosing a
countable dense set  $Q$ of the parameters $(\lambda,a,b)$ in $\R^3$,
for $\mm$-almost every $x\in X$
the previous inequality holds for
every $(\lambda,a,b)\in Q$.
Since the dependence of the left and right
side of the inequality w.r.t.\ $\lambda,a,b$ is continuous, we conclude
that for 
$\mm$-almost
every $x\in X$ 
the inequality holds for every $(\lambda,a,b)\in \R^3$. 
Apart from a $\mm$-negligible set, 
for every $x$ we can then choose $a:=f^2(x)$, $b=f^3(x)$
so that $\Phi_2(\ff)(x)=\Phi_3(\ff)(x)=0$  obtaining
\begin{align*}
  &\lambda^2\edmeas {f^1}+
  4\lambda \tril {f^1}{f^2}{f^3}
  +2\Big(\Gq {f^2}\Gq {f^3}+\Gbil {f^2}{f^3}^2\Big)\ge 
  K\lambda^2 \Gq {f^1}.
\end{align*}
Since $\lambda$ is arbitrary and 
\begin{displaymath}
  \Gq {f^2}\Gq {f^3}+\Gbil {f^2}{f^3}^2\le 2\Gq {f^2}\Gq{f^3},
\end{displaymath}
we eventually obtain
\begin{equation}
  \label{eq:54}
  \Big(\tril {f^1}{f^2}{f^3} \Big)^2\le \Big(\gdq {f^1}-K\Gq {f^1}\Big)\Gq {f^2}\Gq {f^3}
\end{equation}
that provides \eqref{eq:74}. 
\eqref{eq:75} then follows by first noticing that 
\begin{equation}
  \label{eq:76}
  \tril fgh+\tril gfh=
  \Gbil{\Gbil fg}h, 
\end{equation}
so that 
\begin{equation}
  \label{eq:77}
  \Big|\Gbil{\Gbil fg}h\Big|
  \le \left[\sqrt{\edmeas f-K\Gq f}\sqrt{\Gq g}+
    \sqrt{\edmeas g-K\Gq g}\sqrt{\Gq f}
    \right]\sqrt{\Gq h}.
\end{equation}
We argue now by approximation, fixing $f,g\in \DG$ and approximating an
arbitrary $h\in \cVz$ with a sequence $h_n\in \DG$ 
(e.g.\ by \eqref{eq:regularization}) converging to
$h$ in energy with 
\begin{displaymath}
  \Gq {h_n}\to \Gq{h},\quad
  \Gbil {h_n}{\Gbil fg}\to \Gbil h{\Gbil fg}
\end{displaymath}
pointwise and in
$L^1(X,\mm)$, thanks to \eqref{eq:5} (see also
Remark 2.5 and (4.5) of \cite{AGS12}): \eqref{eq:77} thus hold for arbitrary $h\in \cVz$ and
we can then choose $h:=\Gbil fg$ obtaining \eqref{eq:75}.
\eqref{eq:57} then follows by choosing $g:=f$ in \eqref{eq:75}.
\end{proof}
\begin{corollary}
  \label{cor:kuwada}
  Under the same assumption of
  Theorem \ref{thm:main1}, 
  for every  $f\in \cV$ and $\alpha\in [1/2,1]$ we have
  \begin{equation}
    \label{eq:41}
    \Gq {\LHeat tf}^{\alpha}\le \rme^{-2\alpha Kt} \LHeat t\big(\Gq f^{\alpha}\big).
  \end{equation}
\end{corollary}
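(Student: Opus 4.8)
The plan is to run Bakry's semigroup interpolation in the present weak framework, the only analytic inputs being the pointwise estimate \eqref{eq:57}, the inequality $\edmeas g\ge K\Gq g$ together with the non-negativity of the singular part $\Esmeas g$ in \eqref{eq:47}, and the a priori bound \eqref{eq:17}. By truncation it suffices to prove \eqref{eq:41} for $f\in\cVz$: if $f_N:=(-N)\vee f\wedge N$, then $f_N\to f$ in $\cV$, $\Gq{f_N}\to\Gq f$ in $L^1(X,\mm)$ with $\Gq{f_N}\le\Gq f$, so $\LHeat t(\Gq{f_N}^\alpha)\to\LHeat t(\Gq f^\alpha)$, while $\Gq{\LHeat tf_N}\to\Gq{\LHeat tf}$ $\mm$-a.e.\ along a subsequence. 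Fix then $f\in\cVz$, $t>0$ and $\delta\in(0,t)$: for every $r\ge\delta$ the analyticity of $(\LHeat t)$ in $L^2(X,\mm)$ gives $\LHeat rf\in D(\DeltaE)$ with $\DeltaE\LHeat rf\in\cV$, while \eqref{eq:17} gives $\Gq{\LHeat rf}\le\|f\|_{L^\infty}^2/(2\,\rmI_{2K}(r))\in L^\infty(X,\mm)$. Hence the curve $s\mapsto g_s:=\LHeat{t-s}f$, $s\in[0,t-\delta]$, takes values in $\DG$, is smooth there with $\partial_s g_s=-\DeltaE g_s$, and all the calculus of Section~\ref{sec:BE} applies to each $g_s$.

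For $\eps>0$ set $\Psi_\eps(u):=(u+\eps)^\alpha-\eps^\alpha$ (so $\Psi_\eps\in\rmC^\infty([0,\infty))$, $\Psi_\eps(0)=0$, $\Psi_\eps'\ge0$, $\Psi_\eps''\le0$), fix a non-negative $\varphi\in D(\DeltaE)\cap L^\infty(X,\mm)$ with $\DeltaE\varphi\in L^\infty(X,\mm)$, write $\psi_s:=\LHeat s\varphi$, and consider
\[
  F_\eps(s):=\int_X\LHeat s\big(\Psi_\eps(\Gq{g_s})\big)\,\varphi\,\d\mm=\int_X\Psi_\eps(\Gq{g_s})\,\psi_s\,\d\mm,\qquad s\in[0,t-\delta].
\]
Differentiating in $s$, using $\partial_s g_s=-\DeltaE g_s$, the chain rule \eqref{eq:49}, the Leibnitz rule \eqref{eq:18}, and the defining relation \eqref{eq:30} for $\DeltaEM\Gq{g_s}$ (recall $\Gq{g_s}\in\cMz$ by Lemma~\ref{le:Gcarre-in-V} and $\tfrac12\DeltaEM\Gq{g_s}-\Gbil{g_s}{\DeltaE g_s}\mm=\Edmeas{g_s}$ by \eqref{eq:46}), the two terms containing $\Gbil{g_s}{\DeltaE g_s}$ cancel and one is left with
\[
  F_\eps'(s)=2\int_X\widetilde{\Psi_\eps'(\Gq{g_s})}\,\tilde\psi_s\,\d\Edmeas{g_s}+\int_X\Psi_\eps''(\Gq{g_s})\,\Gq{\Gq{g_s}}\,\psi_s\,\d\mm .
\]

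Since $\psi_s\ge0$ $\mm$-a.e., hence $\tilde\psi_s\ge0$ q.e., and $\Psi_\eps'\ge0$, the singular part $\Esmeas{g_s}\ge0$ of $\Edmeas{g_s}=\edmeas{g_s}\mm+\Esmeas{g_s}$ contributes a non-negative term and may be discarded; using then $\Psi_\eps''\le0$, the estimate \eqref{eq:57} in the form $\Gq{\Gq{g_s}}\le4(\edmeas{g_s}-K\Gq{g_s})(\Gq{g_s}+\eps)$, and $\edmeas{g_s}\ge K\Gq{g_s}$, an elementary manipulation that exploits $\alpha\in[1/2,1]$ (so that $2\alpha-1\ge0$ and $\alpha-1\le0$) gives, $\mm$-a.e.,
\[
  2\,\Psi_\eps'(\Gq{g_s})\,\edmeas{g_s}+\Psi_\eps''(\Gq{g_s})\,\Gq{\Gq{g_s}}\ \ge\ 2\alpha K\,\Psi_\eps(\Gq{g_s})-c_{\alpha,K}\,\eps^\alpha .
\]
Integrating against $\psi_s\ge0$ and using that $\LHeat s$ is mass preserving yields $F_\eps'(s)\ge2\alpha K\,F_\eps(s)-c_{\alpha,K}\,\eps^\alpha\int_X\varphi\,\d\mm$, whence $\tfrac{\d}{\d s}\big(\rme^{-2\alpha Ks}F_\eps(s)\big)\ge-c_{\alpha,K}\,\eps^\alpha\rme^{-2\alpha Ks}\int_X\varphi\,\d\mm$. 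Integrating on $[0,t-\delta]$ and letting $\varphi$ range over a sufficiently rich class of non-negative test functions (e.g.\ the regularizations \eqref{eq:regularization} of non-negative elements of $L^2\cap L^\infty$) we get the $\mm$-a.e.\ inequality $\Psi_\eps(\Gq{\LHeat tf})\le\rme^{-2\alpha K(t-\delta)}\LHeat{t-\delta}\big(\Psi_\eps(\Gq{\LHeat\delta f})\big)+C_{\alpha,K,t}\,\eps^\alpha$. Since $\Psi_\eps(u)\uparrow u^\alpha$ as $\eps\downarrow0$, monotone convergence gives $\Gq{\LHeat tf}^\alpha\le\rme^{-2\alpha K(t-\delta)}\LHeat{t-\delta}\big(\Gq{\LHeat\delta f}^\alpha\big)$, and finally $\delta\downarrow0$, using $\LHeat\delta f\to f$ in $\cV$, the continuity of $\Gamma$ and the strong continuity of $(\LHeat t)$, gives \eqref{eq:41} for $f\in\cVz$; the truncation above then completes the proof.

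The delicate point is the differentiation producing the formula for $F_\eps'(s)$: one must organise the integration by parts so that $\DeltaEM\Gq{g_s}$ — being only a measure, it cannot be multiplied pointwise — appears solely tested against the $\cE$-quasi-continuous function $\widetilde{\Psi_\eps'(\Gq{g_s})}\,\tilde\psi_s$, and so that the single surviving singular object is the manifestly non-negative $\widetilde{\Psi_\eps'(\Gq{g_s})}\,\tilde\psi_s\,\Esmeas{g_s}$. This is exactly what legitimises the replacement of the measure $\Edmeas{g_s}$ by its $\mm$-density $\edmeas{g_s}$, and it relies on the fact (Proposition~\ref{prop:Daniell}, Lemma~\ref{le:Lapmeas}) that $\Edmeas{g_s}$ charges no $\cE$-polar set and pairs with the quasi-continuous representatives of functions in $\cV$; once this is in place the remaining computation is the classical one-variable argument of \cite{Bakry85,Bakry-Ledoux06,Bakry06,Wang11}.
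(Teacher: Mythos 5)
Your proof follows the same Wang-style semigroup interpolation (\cite{Wang11}) as the paper: differentiate $s\mapsto\int_X\Psi_\eps(\Gq{\heat{t-s}f})\,\heat s\varphi\,\d\mm$, reorganise so that $\Gamma_2^\star[\cdot]$ appears only as a measure paired with a quasi-continuous test function, discard the nonnegative singular part $\Gamma_2^\perp$, close the estimate via \eqref{eq:57} together with the concavity of $\Psi_\eps$, and finish with a Gronwall-type integration and $\eps\downarrow0$. Two genuine variations deserve comment. First, the reduction to ``nice'' $f$: the paper fixes $f\in\DG$ from the start (so the whole curve $\heat{t-s}f$, $s\in[0,t]$, lies in $\DG$) and at the very end extends \eqref{eq:16} to $f\in\cV$ by density of $\DG$; you instead take $f\in\cVz$ after a truncation step and run the differential argument on $[0,t-\delta]$, using analyticity and \eqref{eq:17} to see that $\heat\delta f\in\DG$, then send $\delta\downarrow0$. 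Both reductions are legitimate and of comparable length. Second, and this is a small improvement: you replace \eqref{eq:57} by the weakened form $\Gq{\Gq{g_s}}\le 4\big(\gdq{g_s}-K\Gq{g_s}\big)\big(\Gq{g_s}+\eps\big)$, so that the coefficient that appears after rearranging is $2\Psi_\eps'(u)+4(u+\eps)\Psi_\eps''(u)=2\alpha(2\alpha-1)(\eps+u)^{\alpha-1}\ge0$, at the price of an $O(\eps^\alpha)$ error that vanishes in the limit. The paper instead drops the analogous term $\big(2\eta_\eps'(u)+4u\eta_\eps''(u)\big)(\gamma_2-Ku)\ge0$ and appeals to \eqref{eq:59} to write $2Ku\eta_\eps'(u)\ge 2\alpha K\eta_\eps(u)$; but $u\eta_\eps'(u)\ge\alpha\eta_\eps(u)$ only yields this inequality when $K\ge0$ (for $K<0$ the sense reverses). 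The discrepancy $u\eta_\eps'(u)-\alpha\eta_\eps(u)=\alpha\eps^\alpha-\alpha\eps(\eps+u)^{\alpha-1}\in[0,\alpha\eps^\alpha]$ is exactly what your $c_{\alpha,K}\eps^\alpha$ term absorbs, so your version of the algebra handles $K<0$ cleanly and in fact closes a small gap in the paper's write-up.
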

\begin{proof}
  We adapt here the strategy of \cite{Wang11}. 
  Since the case $\alpha=1$ has been already covered by \eqref{eq:5},
  we can also assume $1/2\le \alpha<1$.

  We consider the concave and smooth function
  $\eta_\eps(r):=\big(\eps+r\big)^\alpha-\eps^\alpha$,
  $\eps>0$, $r\ge
  0$, and for a time $t>0$, a nonnegative $\zeta\in \cVz$, and
  an arbitrary $f\in \DG$ 
  we define the curves
  \begin{equation}
    \label{eq:58}
    f_{\tau}:=\heat{\tau} f,\ \zeta_s:=\heat s\zeta,\ 
    u_{\tau}:=\Gq{f_{\tau}},\quad
    G_\eps(s):=\int_X \eta_\eps(u_{t-s})\,\zeta_s\,\d\mm,\quad \tau,s\in
    [0,t].
  \end{equation}
  Notice that $\eta_\eps$ is smooth and Lipschitz; a direct
  computation yields
  \begin{equation}
    \label{eq:59}
    \eta_\eps(r)\le r^\alpha,\quad
    r\,\eta_\eps'(r)\ge \alpha\eta_\eps,
    \quad
    2\eta_\eps'(r)+4r\,\eta_\eps''(r)\ge
    0.
  \end{equation}
  Moreover, for every $s\in [0,t]$ $f_{t-s}\in \DG$ so that 
  $u_{t-s}\in \cV\cap L^1\cap L^\infty(X,\mm),$
  \begin{displaymath}
    \frac \d{\d s} u_{t-s}=
    -2\Gbil {f_{t-s}}{\DeltaE f_{t-s}},\quad
    \frac \d{\d s} \eta_\eps(u_{t-s})=
    -2\eta_\eps'(u_{t-s})\Gbil {f_{t-s}}{\DeltaE f_{t-s}}\quad
    \text{in }L^1\cap L^2(X,\mm).
  \end{displaymath}
  Differentiating with respect to $s\in (0,t)$ we get
  \renewcommand{\LHeat}[2]{#2_{#1}}
  \begin{align*}
    G'(s)&=\int_X \Big(\eta_\eps(u_{t-s})\,\DeltaE
    \zeta_s -2\eta_\eps'(u_{t-s})\Gbil {f_{t-s} }{\DeltaE f_{t-s}}\zeta_s
    \Big)\,\d\mm
    \\&=
    -\int_X
    \eta_\eps'(u_{t-s})\Gbil{\Gq{f_{t-s}}}{\zeta_s}
    +2\Gbil{f_{t-s}}{\DeltaE
      f_{t-s}}\eta_\eps'(u_{t-s})\zeta_s
    \Big)\,\d\mm
  \\&=
    -\int_X\Big(
    \Gbil{\Gq{f_{t-s}}}{\eta_\eps'(u_{t-s})\zeta_s}
    -\Gq {\Gq {\LHeat{t-s}
        f}}\eta_\eps''(u_{t-s})\zeta_s
    +2\Gbil{f_{t-s}}{\DeltaE f_{t-s}}\eta_\eps'(u_{t-s})\zeta_s
    \Big)\,\d\mm
    \\&=
    2\int_X \eta_\eps'(\tilde u_{t-s})\tilde\zeta_s\,\d\Edmeas{f_{t-s}}+
    \int \Gq {\Gq {\LHeat{t-s}
        f}}\eta_\eps''(u_{t-s})\zeta_s\,\d\mm
        \\&\ge
        2\int_X \eta_\eps'(u_{t-s})\zeta_s\,\gamma_2(f_{t-s})\,\d\mm
        +
        4\int_X \eta_\eps''(u_{t-s}) \big(\gamma_2(f_{t-s})-Ku_{t-s}\big)u_{t-s}\,\zeta_s\,\d\mm
    \\&=
    \int_X
    \Big(2\eta_\eps'(u_{t-s})+4\eta_\eps''(u_{t-s})u_{t-s}\Big)\Big(\gamma_2(f_{t-s})-K
    u_{t-s}\Big)\zeta_s\,\d\mm
    +2K\int_X \eta_\eps'(u_{t-s})u_{t-s}\zeta_s\,\d\mm
    \\&\ge
    2K\int_X \eta_\eps'(u_{t-s})u_{t-s}\zeta_s\,\d\mm
    \topref{eq:59}\ge 2\alpha\,K\int_X \eta_\eps(u_{t-s})\zeta_s\,\d\mm=
    2\alpha\, KG_\eps(s).
  \end{align*}
  thanks to \eqref{eq:57}.

Since $G$ is continuous, we obtain $G_\eps(0)\rme^{2\alpha Kt}\le G_\eps(t)$
which yields, after passing to the limit as $\eps\down0$
\begin{equation}
  \label{eq:16}
  \rme^{2\alpha Kt}\int_X {\Gq{\heat t f}}^\alpha\,\zeta\,\d\mm\le 
  \int_X {\Gq f}^\alpha\,\heat t \zeta\,\d\mm=
  \int_X \heat t\big({\Gq f}^\alpha\big)\, \zeta\,\d\mm.
\end{equation}
Since $\DG$ is dense in $\cV$ we can extend
\eqref{eq:16} to arbitrary $f\in \cV$ and then obtain 
\eqref{eq:41}, since $\zeta$ is arbitrary.
\end{proof}

\section{$\RCD K\infty$-metric measure spaces}

\label{sec:RCD}

In this section we will apply the previous result to 
prove new contraction properties w.r.t.~transport costs
(in particular $W_p$ Wasserstein distance) for the heat flow in 
$\RCD K\infty$ metric measure spaces.

\subsection{Basic notions}

\label{subsec:basic}
\subsubsection*{Metric measure spaces, transport and Wasserstein
  distances, entropy}

\newcommand{\Probabilitiesp}[1]{\mathscr P_p(#1)}
We will quickly recall a few basic facts concerning
optimal transport of probability measures, also to fix notation;
we refer to
\cite{AGS08,Villani09} for more details.

 Let $(X,\sfd)$ be a complete and separable metric space endowed with 
a Borel measure $\mm$ 
satisfying 
\begin{equation}
  \label{eq:2}
  \supp(\mm)=X,\qquad
  \mm(B_r(\bar x))\le \sfc_1 \exp(\sfc_2 r^2)\quad\forevery r>0,
  \tag{$\mm$-exp}
\end{equation}
for some constants $\sfc_1,\sfc_2\ge0$ and a point $\bar x\in X$.

Recall that for every Borel probability measure $\mu\in \Probabilities
Y$ in a separable metric space $Y$ and every Borel map $\rr:Y\to X$,
the push-forward $\rr_\sharp\mu\in \Probabilities X$ is defined by
$\rr_\sharp\mu(B)=\mu(\rr^{-1}(B))$ for every $B\in \BorelSets X$.
If $\mu_i\in \Probabilities X$, $i=1,2$, we denote by 
$\AdmissiblePlanII {\mu_1}{\mu_2}$ the collection of 
all couplings $\mmu$ 
between $\mu_1$ and $\mu_2$, i.e.\ measures in $\Probabilities{X\times
  X}$ 
whose marginals $\pi^i_\sharp \mmu$ coincide with $\mu_i$
(here $\pi^i(x_1,x_2)=x_i$).
Given a nondecreasing continuous function $h:[0,\infty)\to
[0,\infty)$, we consider the transport cost 
\begin{equation}
  \label{eq:21}
  \calC_h(\mu_1,\mu_2):=\min\Big\{\int_{X\times X}
  h(\sfd(x,y))\,\d\mmu(x,y):
  \mmu\in \AdmissiblePlanII{\mu_1}{\mu_2}\Big\},
\end{equation}
where we implicitly assume that the minimum is $+\infty$ if 
couplings with finite cost do not exist.
In the particular case $h(r):=r^p$ we set
\begin{equation}
  \label{eq:32}
  W_p(\mu_1,\mu_2):=\big(\calC_h(\mu_1,\mu_2)\big)^{1/p},\quad h(r):=r^p,
\end{equation}
and we also set
\begin{equation}
  \label{eq:33}
  W_\infty(\mu_1,\mu_2)=\min\Big\{\|\sfd\|_{L^\infty(X\times
    X,\smmu)}:\mmu\in \AdmissiblePlanII{\mu_1}{\mu_2}\Big\}
  =\lim_{p\up\infty}W_p(\mu_1,\mu_2).
\end{equation}
Denoting by $\Probabilitiesp X$ the space of Borel probability
measures
with finite $p$-th moment, i.e.
\begin{equation}
  \label{eq:34}
  \mu\in \Probabilitiesp X\quad\Longleftrightarrow\quad
  \int_X \sfd^p(x,\bar x)\,\d\mu(x)<\infty\quad\text{for some (and
    thus any) $\bar x\in X$},
\end{equation}
$(\Probabilitiesp X,W_p)$ is a complete and separable metric space.

The relative entropy of a measure $\mu\in \ProbabilitiesTwo X$ is defined
as 
\begin{equation}
  \label{eq:35}
  \ent \mu:=
  \begin{cases}
    \int_X \rho\log \rho\,\d\mm&\text{if }\mu=\rho\mm\ll \mm,\\
    +\infty&\text{otherwise.}
  \end{cases}
\end{equation}
The entropy functional is well defined
and lower semicontinuous w.r.t.\ $W_2$ convergence
(see e.g.\ \cite[\S 7.1]{AGS11a}

\subsubsection*{The Cheeger energy and its $L^2$-gradient flow}

We first recall that the metric slope of a 
Lipschitz function $f:X\to\R$ is defined by
\begin{equation}
  \label{eq:36}
  |\rmD f|(x):=\limsup_{y\to x}\frac{|f(y)-f(x)|}{\sfd(x,y)}.
\end{equation}
The Cheeger energy \cite{Cheeger00,AGS11a} is obtained as the $L^2$-lower semicontinuous
envelope 
of the functional $f\mapsto \frac 12\int_X |\rmD f|^2\,\d\mm$:
\begin{equation}
  \label{eq:37}
  \C (f):=\inf\Big\{\liminf_{n\to\infty}\frac12\int_X |\rmD
  f_n|^2\,\d\mm:f_n\in \Lip_b(X),\quad
  f_n\to f\text{ in }L^2(X,\mm)\Big\}.
\end{equation}
If $\C(f)<\infty$ it is possibile to show that the collection 
\begin{displaymath}
  S(f):=\Big\{G\in L^2(X,\mm):\exists f_n\in \Lip_b(X),\ f_n\to f,\ |\rmD
  f_n|\weakto G
  \text{
    in }L^2(X,\mm)\Big\}
\end{displaymath}
admits a unique element of minimal norm, 
\emph{the minimal weak upper gradient}  $|\rmD f|_w$, 
that it is also minimal with
respect to the order structure \cite[\S 4]{AGS11a}, i.e.\ 
\begin{equation}
  \label{eq:84}
  |\rmD f|_w\in S(f),\quad |\rmD f|_w\le G\quad \text{$\mm$-a.e.}\quad\forevery 
  G\in S(f).
\end{equation}
By $|\rmD f|_w$ we can also represent $\C(f)$ as
\begin{equation}
  \label{eq:51}
  \C(f)=\frac 12\int_X |\rmD f|_w^2\,\d\mm.
\end{equation}
It turns out that $\C$ is a $2$-homogeneous, l.s.c., convex functional in
$L^2(X,\mm)$, whose proper domain $D(\C):=\{f\in
L^2(X,\mm):\C(f)<\infty\}$ is a dense linear subspace of $L^2(X,\mm)$.

Its $L^2$-gradient flow 
is a continuous semigroup of contractions $(\sfh_t)_{t\ge0}$ in
$L^2(X,\mm)$,
whose continuous trajectories $f_t=\sfh_t f$, $t\ge0$ and $f\in
L^2(X,\mm)$, 
are locally Lipschitz curves in $(0,\infty)$ with values in
$L^2(X,\mm)$ characterised by the differential inclusion
\begin{equation}
  \label{eq:31}
  \frac \d{\d t}f_t+\partial\C(f_t)\ni0\quad\text{a.e.\ in }(0,\infty).
\end{equation}

\subsection{$\RCD K\infty$-spaces}

In order to state the main equivalent definitions of $\RCD K\infty$
spaces,
let us first recall a list of relevant properties:
\begin{description}
\item[\QCh:] The Cheeger energy is quadratic, i.e.\ 
  \begin{equation}
  \label{eq:38}
  \C(f+g)+\C(f-g)=2\C(f)+2\C(g)\quad\text{for every }f,g\in D(\C).
\end{equation}
\item[\CDKI:] 
  The entropy functional is displacement $K$-convex in 
    $\ProbabilitiesTwo X$ \cite{Sturm06I,Lott-Villani09}, i.e.\ 
    for every $\mu_0,\mu_1\in D(\entv)\subset \ProbabilitiesTwo X$ and $t\in [0,1]$ 
    there exists $\mu_t\in \ProbabilitiesTwo X$ such that
    \begin{equation}
      \label{eq:61}
      \begin{gathered}
        W_2(\mu_0,\mu_t)=tW_2(\mu_0,\mu_1),\quad
        W_2(\mu_t,\mu_1)=(1-t)W_2(\mu_0,\mu_1),\\
        \ent {\mu_t}\le (1-t)\ent{\mu_0}+
        t\ent{\mu_1}-\frac K2 t(1-t)W_2(\mu_0,\mu_1).
      \end{gathered}
    \end{equation}
  \item[\Length:] $(X,\sfd)$ is a length space, i.e.\
    for every $x_0,x_1\in X$ and $\eps>0$ there exists an
    $\eps$-middle point $x_\eps\in X$ such that 
    \begin{equation}
      \label{eq:63}
      \sfd(x_0,x_\eps)<\frac 12\sfd(x_0,x_1)+\eps,\quad
      \sfd(x_1,x_\eps) <\frac 12\sfd(x_0,x_1)+\eps.
    \end{equation}
    \item[\Cont:] Every bounded function $f\in D(\C)$ with $|\rmD f|_w\le 1$
      admits a continuous representative.
    \item[\Wcont:] For every $f_0,f_1\in
      L^2(X,\mm)$ with
      $f_i\mm\in \ProbabilitiesTwo X$ we have
      \begin{equation}
        \label{eq:64}
        W_2(\sfh_t f_0\,\mm,\sfh_t f_1\,\mm)\le \rme^{-K
          t}W_2(f_0\mm,f_1\mm)\quad t\ge0.
      \end{equation}
    \item[\BEKI:] 
      Assuming that the Cheeger energy is quadratic, then the
      Dirichlet form $\cE:=2\C$ satisfies
      the Bakry-\'Emery condition according to Definition
      \ref{def:BE}.
    \item[\EVIK:] 
      For every $\bar\mu\in \ProbabilitiesTwo X$ there
      exists a curve $(\mu_t)_{t\ge0}\subset D(\entv)$ such that 
      $\lim_{t\down0}\mu_t=\bar\mu$ and 
      \begin{equation}
        \label{eq:65}
        \frac \d{\d t_+}\frac 12 W_2^2(\mu_t,\nu)+\frac K2
        W_2^2(\mu_t,\nu)\le \ent \nu-\ent{\mu_t}\quad t>0.
      \end{equation}
\end{description}
Let us now recall the main equivalence results:
\begin{theorem}
  Let $(X,\sfd,\mm)$ be a complete, length, and separable metric measure space 
  satisfying condition \eqref{eq:2} and let $K\in \R$.
  The following set of conditions for $(X,\sfd,\mm)$ are equivalent:
  \begin{enumerate}[\rm (I)]
  \item \QCh\ and \CDKI;
  \item \QCh, \Cont, and \Wcont;
  \item
    \QCh, \Cont, and \BEKI;
  \item \EVIK.
  \end{enumerate}
  Moreover, if one of the above conditions hold then 
  $\cE:=2\C$ is a strongly local and 
   quasi-regular
  (see Definition \ref{def:quasi-regular})
  Dirichlet form,
  it 
  admits the Carr\'e du Champ
  \begin{equation}
    \label{eq:52}
    \Gq f=|\rmD f|_w^2\quad\forevery f\in D(\C),
  \end{equation}
  the subdifferential $\partial\C$ is single-valued and coincides with
  the linear generator $\DeltaE$, 
  $(\sfh_t)_{t\ge0}=(\heat t)_{t\ge0}$, and for every
  $\bar\mu=f\mm\in \ProbabilitiesTwo X$ with $f\in L^2(X,\mm)$ 
  the curve $\mu_t=\sfh_t f\mm$ is the unique solution of
  \eqref{eq:65}. 
  Eventually, any essentially bounded function $f\in D(\C)$ 
  with $|\rmD f|_w\le L$ admits a $L$-Lipschitz representative $\tilde
  f$,
  and for every $f\in \Lip_b(X),g\in \rmC_b(X)$ we have
  \begin{equation}
    \label{eq:85}
    |\rmD f|_w\le |\rmD f|;\quad
    |\rmD f|_w\le g\quad \Longrightarrow\quad
    |\rmD f|\le g.
  \end{equation}
\end{theorem}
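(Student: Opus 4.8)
The plan is to split the proof into three blocks: (a) the structural consequences of \QCh\ alone (under the standing assumptions that $(X,\sfd)$ is complete, separable and length, and $\mm$ satisfies \eqref{eq:2}); (b) the equivalence of conditions (I), (II) and (IV), which is essentially the main equivalence theorem of \cite{AGS11a,AGS11b}; and (c) the insertion of (III) through the Kuwada duality developed in \cite{AGS12}. For block (a) I would first note that, when \C\ is quadratic, $\cE:=2\C$ is a symmetric nonnegative bilinear form, closed by the $L^2$-lower semicontinuity of \C, while the Markov property (stability under normal contractions) and strong locality are inherited from the corresponding properties of the metric slope $f\mapsto|\rmD f|$ and the $L^2$-density of $\Lip_b(X)$ in $D(\C)$; thus $\cE$ is a strongly local Dirichlet form, and \eqref{eq:52} is the characterisation of the Carr\'e du champ obtained from the Leibnitz rule for minimal weak upper gradients (\cite{AGS11a}). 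Since \C\ is $2$-homogeneous and quadratic, $\partial\C$ is single-valued and linear, hence the $L^2$-gradient flow in \eqref{eq:31} is a linear, mass- and positivity-preserving --- thus Markov --- semigroup with generator $\DeltaE$, and $(\sfh_t)_{t\ge0}=(\heat t)_{t\ge0}$. Finally, adding \Cont, I would derive quasi-regularity of $\cE$ from the criterion of Lemma \ref{le:qr-criterium} (cf.\ also \cite{AGS11b,AGS12}): \eqref{eq:2} supplies the finite-capacity exhaustion $(X_n)$ of \eqref{eq:86} (a ball has finite $\cE$-capacity, a Lipschitz cut-off being an admissible competitor), Lipschitz functions lie in $\cV\cap\rmC(X)$, are $L^2$-dense in $\cV$ by \eqref{eq:37} and separate points (e.g.\ through $x\mapsto a\wedge\sfd(x,x_0)$), and the compact approximating sets come from the inner regularity of $\mm$.

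For block (b) I would invoke the equivalence theorem of \cite{AGS11b}: under \QCh\ and the standing assumptions, each of \CDKI, \Wcont\ and \EVIK\ implies the others. The implications \EVIK$\Rightarrow$\Wcont$\Rightarrow$\CDKI are comparatively soft --- the first is immediate from the contraction built into EVI, and the second follows from the general theory of metric EVI flows (cf.\ \cite{AGS08}), which also yields uniqueness of the \EVIK-curve and so identifies $\mu_t=\sfh_t f\,\mm$ as \emph{the} solution of \eqref{eq:65}, as well as linearity of the flow, i.e.\ \QCh. The hard implication is \CDKI$\Rightarrow$\EVIK: here one first identifies, as in \cite{AGS11a}, the linear semigroup $(\heat t)_{t\ge0}$ on probability densities with the $W_2$-metric gradient flow of $\entv$, and then, exploiting the linearity granted by \QCh\ together with the $K$-displacement convexity of $\entv$, upgrades this to the sharp inequality \eqref{eq:65}. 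The same circle of ideas yields the Sobolev-to-Lipschitz property --- whence \Cont, the existence of an $L$-Lipschitz representative for bounded $f\in D(\C)$ with $|\rmD f|_w\le L$, and the identity \eqref{eq:78} of $\sfd$ with the intrinsic distance of $\cE$. This establishes (I)$\Leftrightarrow$(IV) and (I)$\Rightarrow$(II), and (II)$\Rightarrow$(I) follows because \Wcont\ sits in the same equivalence.

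For block (c) I would use that, under \QCh\ and \Cont, $\cE$ admits a Carr\'e du champ and the identities \eqref{eq:3}--\eqref{eq:78} hold, so that \BEKI\ is equivalent \cite[Corollary 2.3]{AGS12} to the pointwise bound \eqref{eq:5}; Kuwada's duality \cite{Kuwada10}, in the refined form of \cite{AGS12} which uses precisely \eqref{eq:3} and \eqref{eq:78}, then matches \eqref{eq:5} with the $W_2$-contraction, i.e.\ with \Wcont. This gives (II)$\Leftrightarrow$(III), and hence (I)$\Leftrightarrow$(III) as well by block (b), with no separate Bochner computation needed. The remaining ``moreover'' assertions have already been collected: the Dirichlet-form structure, quasi-regularity and \eqref{eq:52} in (a); $\partial\C=\DeltaE$, $(\sfh_t)=(\heat t)$, uniqueness of the \EVIK-curve and the $L$-Lipschitz regularisation in (b); while \eqref{eq:85} follows from the trivial competitor bound $|\rmD f|_w\le|\rmD f|$ in \eqref{eq:37} together with the Sobolev-to-Lipschitz property applied to $f$.

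I expect the main obstacle to be the implication (I)$\Rightarrow$(IV): extracting the \emph{sharp} inequality \EVIK\ from the mere $K$-displacement convexity of $\entv$ genuinely requires both the linearisation supplied by \QCh\ and a delicate metric action estimate along the linear heat flow --- a non-smooth substitute for the classical Bakry-\'Emery computation, built on the Hamilton--Jacobi semigroup on Kantorovich potentials and Kuwada's lemma --- which is the principal content of \cite{AGS11b}. Everything else reduces to soft functional analysis, the general theory of metric EVI flows, and the duality of \cite{Kuwada10,AGS12}.
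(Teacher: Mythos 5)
Your overall architecture matches the paper's: cite the equivalence results of \cite{AGS11b,AGMR12,AGS12} for (I)$\Leftrightarrow$(II)$\Leftrightarrow$(III)$\Leftrightarrow$(IV), pull \eqref{eq:52}, \eqref{eq:85}, the identification $\partial\C=\DeltaE$ and the EVI-uniqueness from those references, and verify quasi-regularity by checking the hypotheses of Lemma~\ref{le:qr-criterium}. Up to that last step, the argument is sound (your internal sketch of which implications in \cite{AGS11b} are ``soft'' and which are ``hard'' is not strictly needed, but harmless; and you could be a bit more careful in attributing the $\sigma$-finite extension of (I)$\Leftrightarrow$(IV) to \cite{AGMR12}).

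The genuine gap is in the verification of $\langle\mathrm{QR}.1'\rangle$. You write that ``the compact approximating sets come from the inner regularity of $\mm$.'' This cannot work as stated. Inner regularity produces compact $K\subset X_n$ with $\mm(X_n\setminus K)$ arbitrarily small, but what Lemma~\ref{le:qr-criterium} requires is $\cp(X_n\setminus K)$ small, and the only available comparison is $\cp(A)\geq\mm(A)$, which points the wrong way: a set of small measure can have large capacity. Controlling capacity by compact sets is exactly the hard part, and the paper devotes a genuinely nontrivial argument to it, following \cite[Prop.~IV.4.2]{Ma-Rockner92} (and \cite[Lemma~6.7]{AGS11b} in the finite-measure case): one takes a dense sequence $(x_j)$ in $X_{n+1}$, builds the $1$-Lipschitz functions $w_k(x)=\psi_{n+1}(x)\wedge\min_{j\le k}\sfd(x,x_j)$, observes that $(w_k)$ is bounded in $\cV$ and converges to $0$ in $L^2$, extracts Cesaro means $v_h\to 0$ strongly in $\cV$ by Banach--Saks, upgrades this to quasi-uniform convergence on closed sets of small capacity via \cite[Thm.~1.3.3]{Chen-Fukushima12}, and finally reads off total boundedness of $\overline X_n\cap F_m$ from the smallness of $w_p$ on $F_m$. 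Your proposal should replace the appeal to inner regularity of $\mm$ with this capacitary tightness argument; otherwise the proof of quasi-regularity, and hence of the ``moreover'' clause, is incomplete.
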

\begin{proof}
  The implication (I)$\Leftrightarrow$(IV) has been proved in
  \cite[Thm.~5.1]{AGS11b} in the case when $\mm \in\ProbabilitiesTwo
  X$ and extended to the general case by \cite{AGMR12}.
  (IV)$\Rightarrow$(II),(III) has been proved in \cite[Thm.~6.2,
  Thm.~6.10]{AGS11b} and the relations
  (II)$\Leftrightarrow$(III)$\Rightarrow$(IV) have been proved in
  \cite[Thm.~3.17, Cor.~3.18, Cor.~4.18]{AGS12}.  \eqref{eq:52}
  follows from \cite[Thm.~4.18]{AGS11b}; see \cite[Prop.~3.11]{AGS12}
  for \eqref{eq:85}.

   Let us eventually check that $\cE$ is quasi-regular, by applying Lemma
  \ref{le:qr-criterium}. 
  $\langle\mathrm{QR}.2'\rangle$ is always true for a Cheeger
  energy, since Lipschitz functions are dense in $\cV$ by
  \eqref{eq:37}.
  
  When $\mm(X)<\infty$ we can always choose $X_n:=X$ 
  and $\langle\mathrm{QR}.1'\rangle$ 
  reduces to the tightness property \eqref{eq:tight}, that has
  been proved in \cite[Lemma 6.7]{AGS11b}, following an argument of
  \cite[Proposition IV.4.2]{Ma-Rockner92}.
  In the general case we can adapt the same argument: we 
  recall here the various steps for the easy of the reader.
  
  Let us fix a point $\bar x\in X$ and let us set
  $X_n:=B_n(\bar x)$. 
  In order to prove that $(\overline X_n)_{n\in \N}$ is an $\cE$-nest,
  we introduce the $1$-Lipschitz cut-off functions 
  $\psi_n:X\to [0,1]$
  $$\psi_n(x):=0\lor (n-\sfd(x,\bar x))\land 1,\quad
  \text{so that}\quad
  \psi_n(x)=
  \begin{cases}
    1&\text{if }x\in \overline X_{n-1},\\
    0&\text{if }x\in X\setminus X_n,
  \end{cases}
  \quad
  \psi_n(x)\uparrow 1\text{ as }n\to\infty.
  $$
  For every $f\in \cV$ we can consider the approximations
  $f_n:=\psi_n f$ in $\cVrestr{\bar X_n}$.
  The Lebesgue's Dominated Convergence Theorem shows that 
  $f_n\to f$ strongly in $L^2(X,\mm)$ as $n\to\infty$.
  The Leibnitz
  rule yields
  \begin{displaymath}
    |\rmD (f-f_n)|_w\le \big(|\rmD f|_w+|f|\big)\nchi_{X\setminus B_{n-1}(\bar x)}    
  \end{displaymath}
  so that $\lim_{n\to\infty}\cE(f-f_n)= 0$ as well.
  This shows that $f_n\to f$ strongly in $\cV$ and 
  $(\overline X_n)_{m\in \N}$ is an $\cE$-nest.
  
  In order to prove $\langle\mathrm{QR}.1'\rangle$, we fix $n\in \N$,
  we consider a dense sequence 
  $(x_j)_{j\in \N} $ in $X_{n+1}$, and we define 
  the functions $w_k:X\to [0,1]$
  $$
  w_k(x):=\psi_{n+1}(x)\land \min_{1\leq j\leq k}\sfd (x,x_j)\quad
  x\in X.
  $$
  It is easy to check that $w_k$ are $1$-Lipschitz and 
  pointwise nonincreasing, they satisfy $0\leq w_k\leq \psi_{n+1}\le 1$ and
  the pointwise limit $w_k\downarrow 0$ as $k\to\infty$, so that
  $w_k\to w$ strongly in $L^2(X,\mm)$ since $\supp(w_k)\subset \overline
  X_{n+1}$ and
  $\mm(\overline X_{n+1})<\infty$.
  The finiteness of $\mm(\overline X_{n+1})$ also
  yields that $(w_k)_{k\in \N}$ is bounded in 
  $\cV$, so that $w_k\weakto 0$ weakly in $\cV$ as $k\to\infty$.

  The Banach-Saks theorem ensures the existence of an increasing subsequence
  $(k_h)_{h\in \N}$ such that the Cesaro means
  $v_h:=\frac1h \sum_{i=1}^h w_{k_i}
  $
  converge to $0$ strongly in $\cV$. This implies \cite[Thm.~1.3.3]{Chen-Fukushima12}
that a subsequence $(v_{h(l)})$ of $(v_h)$ converges to $0$
quasi-uniformly, 
i.e. for all integers $m\geq 1$
there exists a closed set $G_m\subset X$ such that $\cp(X_{n+1}\setminus G_m)<1/m$
and $v_{h(l)}\to 0$ uniformly on $G_m$. 
As $w_{k_{h(l)}}\leq v_{h(l)}$, if we set $F_m=\cup_{i\leq m}G_i$, we
have that $w_{k_{h(l)}}\to 0$ as $l\to\infty$ 
uniformly on $F_m$ for all $m$ and
$\cp(X_{n+1}\setminus F_m)\leq 1/m$. 

Therefore, for every $\delta>0$ we can find an integer 
$p\in \N$ such that
$w_p<\delta$ on $F_m$; 
since $\psi_{n+1}(x)\equiv 1$ when $x\in \overline
X_n$, the definition of $w_p$ implies
$$
\forall\, x\in \overline X_n\cap F_m\quad\exists 
j\in \N,\ j\le p:\ \sfd(x,x_j)<\delta,\quad
\text{i.e.}\quad
\overline X_n\cap F_m\subset\bigcup_{j=1}^p B(x_i,\delta).
$$
Since $\delta$ is arbitrary this proves that $K_{n,m}:=
\overline X_n\cap F_m$ is totally bounded, hence compact and
$\cp( X_n\setminus K_{n,m})\le \cp(X_{n+1}\setminus F_m)\le 1/m$.
\end{proof}
\begin{definition}
  We say that 
  $(X,\sfd,\mm)$ is $\RCD K\infty$-metric measure space  
  if it is complete, separable and length, $\mm$ satisfies
  \eqref{eq:2}, and at least one of the 
  (equivalent) properties \textup{(I)--(IV)} holds.
\end{definition}
By Corollary \ref{cor:kuwada} we thus obtain:
\begin{corollary}
  \label{cor:dual-improved}
  If $(X,\sfd,\mm)$ is a $\RCD K\infty$ metric measure space, then
  for every $t>0$, $\beta\in [1,2]$ and $f\in \cVz$
  \begin{equation}
    \label{eq:83}
    |\rmD \heat t f|^\beta\le \rme^{-\beta Kt}\heat t(|\rmD f|_w^\beta).
  \end{equation} 
\end{corollary}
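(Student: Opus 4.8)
The plan is to deduce \eqref{eq:83} from the self-improved commutation estimate \eqref{eq:41} of Corollary~\ref{cor:kuwada}, combined with the identity $\Gq v=|\rmD v|_w^2$ from \eqref{eq:52} and with the comparison \eqref{eq:85} between minimal weak upper gradients and metric slopes. Taking $\alpha:=\beta/2\in[1/2,1]$ in \eqref{eq:41} and substituting $\Gq v=|\rmD v|_w^2$, we obtain at once, for every $f\in\cVz\subset\cV$,
\[
  |\rmD \heat t f|_w^{\beta}\;\le\;\rme^{-\beta K t}\,\heat t\big(|\rmD f|_w^{\beta}\big)\qquad\mm\text{-a.e.\ in }X.
\]
Hence the whole content of the statement is to upgrade, on the left-hand side, the minimal weak upper gradient $|\rmD \heat t f|_w$ to the pointwise metric slope $|\rmD \heat t f|$. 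As a preliminary, I would check that for $t>0$ and $f\in\cVz$ the function $\heat t f$ admits a Lipschitz representative: since $\RCD K\infty$ entails the $\BE K\infty$ condition, \eqref{eq:17} is available and gives $2\rmI_{2K}(t)\,\Gq{\heat t f}\le\heat t(f^2)\le\|f\|_{L^\infty}^2$, so $|\rmD \heat t f|_w\in L^\infty(X,\mm)$, and a bounded function in $\cVz$ with essentially bounded weak upper gradient has a Lipschitz (in particular bounded and continuous) representative. Thus $\heat t f\in\Lip_b(X)$, the slope $|\rmD \heat t f|$ is meaningful, and by the implication in \eqref{eq:85} it suffices to dominate $|\rmD \heat t f|_w$ $\mm$-a.e.\ by a function $g\in\rmC_b(X)$ with $g^{\beta}\le\rme^{-\beta Kt}\heat t\big(|\rmD f|_w^{\beta}\big)$.

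To produce such a $g$ I would use the semigroup property and split $\heat t f=\heat{t/2}h$ with $h:=\heat{t/2}f$. By the estimate above $\Gq h\in L^\infty(X,\mm)$, so $h\in\cLz$; moreover $|\rmD h|_w\in L^2\cap L^\infty(X,\mm)$ and, since $1\le\beta\le2$, also $|\rmD h|_w^{\beta}\in L^2\cap L^\infty(X,\mm)$. Applying \eqref{eq:41} to $h$ with parameter $t/2$,
\[
  |\rmD \heat t f|_w^{\beta}=|\rmD \heat{t/2}h|_w^{\beta}\;\le\;\rme^{-\beta Kt/2}\,\heat{t/2}\big(|\rmD h|_w^{\beta}\big)\qquad\mm\text{-a.e.},
\]
and a second use of \eqref{eq:17}, now with $|\rmD h|_w^{\beta}\in L^2\cap L^\infty$ in place of $f$, shows that $\bar G:=\heat{t/2}\big(|\rmD h|_w^{\beta}\big)$ has a bounded continuous representative. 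Therefore $|\rmD \heat t f|_w\le\big(\rme^{-\beta Kt/2}\bar G\big)^{1/\beta}\in\rmC_b(X)$ $\mm$-a.e., and \eqref{eq:85} promotes this to $|\rmD \heat t f|^{\beta}\le\rme^{-\beta Kt/2}\,\bar G$ \emph{everywhere} on $X$. Finally, \eqref{eq:41} applied to $f$ itself with parameter $t/2$ gives $|\rmD h|_w^{\beta}\le\rme^{-\beta Kt/2}\,\heat{t/2}\big(|\rmD f|_w^{\beta}\big)$ $\mm$-a.e.; composing once more with the heat semigroup --- a linear and order-preserving operation --- and using the semigroup property, one gets $\bar G\le\rme^{-\beta Kt/2}\,\heat t\big(|\rmD f|_w^{\beta}\big)$ $\mm$-a.e. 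Combining the last two displayed bounds gives exactly \eqref{eq:83}.

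The only genuinely delicate step is the passage from the weak upper gradient $|\rmD \heat t f|_w$ to the metric slope $|\rmD \heat t f|$: it cannot be carried out by a single direct appeal to \eqref{eq:85}, because for a general $f\in\cVz$ the function $\heat t\big(|\rmD f|_w^{\beta}\big)$ is only known to belong to $L^{2/\beta}(X,\mm)$ and need not be continuous. This is precisely why the regularizing half-step splitting $\heat t f=\heat{t/2}h$ --- which turns the merely integrable datum $|\rmD f|_w^{\beta}$ into the bounded $|\rmD h|_w^{\beta}$, and then, after one more application of the heat semigroup, into the bounded continuous function $\bar G$ --- together with the smoothing estimate \eqref{eq:17} is needed. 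Everything else is a routine combination of the linearity, order preservation and mass preservation of the heat semigroup with \eqref{eq:41}, \eqref{eq:52} and \eqref{eq:85}.
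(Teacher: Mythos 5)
The paper states this corollary without a written proof, deducing it in a single line from Corollary~\ref{cor:kuwada}; your argument supplies the details that the paper glosses over and it is correct. The main ingredients you use --- taking $\alpha=\beta/2\in[1/2,1]$ in \eqref{eq:41}, identifying $\Gq{\cdot}=|\rmD\cdot|_w^2$ via \eqref{eq:52}, and upgrading the minimal weak upper gradient to the metric slope via the Sobolev--to--Lipschitz property \eqref{eq:85} --- are exactly the tools the paper has prepared, so this is the intended route. You also correctly pinpoint the only genuinely delicate step: \eqref{eq:85} requires a dominating function in $\rmC_b(X)$, whereas for a general $f\in\cVz$ the function $\heat t(|\rmD f|_w^\beta)$ is merely in $L^{2/\beta}(X,\mm)$. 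Your half-step split $\heat t f=\heat{t/2}(\heat{t/2} f)$, applying \eqref{eq:17} twice to pass first from $L^\infty$ control on $\Gq{\heat{t/2} f}$ to a bounded Lipschitz continuous majorant $\bar G=\heat{t/2}(|\rmD h|_w^\beta)$, and then composing with the order-preserving semigroup, is a clean and correct way to bridge the gap. The resulting inequality \eqref{eq:83} is, as your chain of estimates shows, to be read $\mm$-a.e., which is the only sensible reading since the right-hand side is defined only up to $\mm$-negligible sets.
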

We call $\sfH_t \bar \mu$ the unique solution to \eqref{eq:65}:
by \cite[Prop.~3.2]{AGS12} $(\sfH)_{t\ge0}$ can be extended
in a unique way to a semigroup of weakly continuous operators
in $\Probabilities X$ satisfying
\begin{equation}
  \label{eq:66}
  \lim_{t\down0} \sfH_t\mu=\mu \text{ in }\Probabilities X,\quad
  W_2(\sfH_t\mu_0,\sfH_t \mu_1)\le \rme^{-K t}W_2(\mu_0,\mu_1)\
  \forevery \mu_0,\mu_1\in \Probabilities X.
\end{equation}
In particular we can consider the fundamental solutions 
\begin{equation}
  \label{eq:62}
  \varrho_{t,x}:=\sfH_t \delta_x\in \ProbabilitiesTwo X.
\end{equation}
\subsection{New contraction properties for the heat flow $(\sfH_t)_{t\ge0}$}

Let us fix a parameter $K\in \R$ and 
for every  nondecreasing cost function
$h:[0,\infty)\to[0,\infty)$ 
let us consider the perturbed cost functions
\newcommand{\perth}[2]{h_{#1#2}}
\begin{equation}
  \label{eq:67}
  \perth  Kt (r):=h(\rme^{Kt}\,r),
\end{equation}
and the associated transportation costs $\calC_h,\ \calC_{\perth Kt}$.

\begin{theorem}
  \label{thm:main}
  Let $(X,\sfd,\mm)$ be a $\RCD K\infty$ metric measure space.
  Then 
  \begin{enumerate}[\rm i)]
  \item For every $x,y\in X$ 
    the fundamental solutions $\varrho_{t,x}$, $\varrho_{t,y}$ defined
    by 
    \eqref{eq:62} satisfy
    \begin{equation}
      \label{eq:68}
      W_\infty(\varrho_{t,x},\varrho_{t,y})\le \rme^{-Kt}\sfd(x,y).
    \end{equation}
  \item For every $\mu,\nu\in \Probabilities X$ 
    \begin{equation}
      \label{eq:69}
      \calC_{\perth Kt}(\sfH_t \mu,\sfH_t\nu)\le \calC_{h}(\mu,\nu).
    \end{equation}
  \item For every $\mu,\nu\in \Probabilities X$ and every $p\in
    [1,\infty]$
    \begin{equation}
      \label{eq:70}
      W_p(\sfH_t\mu,\sfH_t\nu)\le \rme^{-Kt}W_p(\mu,\nu).
    \end{equation}
  \end{enumerate}
\end{theorem}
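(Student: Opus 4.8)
The plan is to derive all three statements from the improved commutation bound of Corollary~\ref{cor:dual-improved}, using its endpoint case $\beta=1$ together with Kuwada's duality for i), a gluing of optimal couplings for ii), and the choice $h(r)=r^{p}$ in ii) for iii).

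\emph{Part i).} I would specialise \eqref{eq:83} to $\beta=1$, which gives $|\rmD\heat tf|\le\rme^{-Kt}\,\heat t(|\rmD f|_w)$ for $f\in\cVz$: this is exactly the $L^{1}$-form of the Bakry gradient estimate, with constant $\rme^{-Kt}$. From here \eqref{eq:68} follows by Kuwada's duality \cite{Kuwada10} in the non-smooth formulation of \cite{AGS12}: one tests against bounded Lipschitz functions, evolves them through the Hopf--Lax semigroup — which obeys the Hamilton--Jacobi inequality on $(X,\sfd)$ precisely because, by \eqref{eq:3} and \eqref{eq:78}, $\sfd$ is the intrinsic distance of $\cE$ — combines this with the $L^{1}$-gradient bound and the length property of $(X,\sfd)$, and reads off, through Kantorovich duality for the cost underlying $W_{\infty}$, the contraction $W_{\infty}(\sfH_t\mu,\sfH_t\nu)\le\rme^{-Kt}W_{\infty}(\mu,\nu)$; taking $\mu=\delta_{x}$, $\nu=\delta_{y}$ and using $W_{\infty}(\delta_{x},\delta_{y})=\sfd(x,y)$ yields \eqref{eq:68}. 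The only new ingredient with respect to the $W_{2}$-contraction already proved in \cite{AGS11b,AGS12} is the $\beta=1$ estimate, now supplied by Corollary~\ref{cor:dual-improved}.

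\emph{Part ii).} We may assume $\calC_{h}(\mu,\nu)<\infty$ and fix an optimal coupling $\pi\in\AdmissiblePlanII{\mu}{\nu}$ for $\calC_{h}$. For $\pi$-a.e.\ $(x,y)$, part i) and the attainment of the minimum in \eqref{eq:33} provide a plan $\sigma_{x,y}\in\AdmissiblePlanII{\varrho_{t,x}}{\varrho_{t,y}}$ supported in $\{(z,w):\sfd(z,w)\le\rme^{-Kt}\sfd(x,y)\}$, and a measurable selection theorem makes $(x,y)\mapsto\sigma_{x,y}$ Borel. Set $\Sigma:=\int_{X\times X}\sigma_{x,y}\,\d\pi(x,y)$. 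Since $\varrho_{t,x}=\sfH_t\delta_{x}$ and $\sfH_t$ is the weakly continuous affine extension to $\Probabilities X$ of the linear heat semigroup (so $\int\varphi\,\d\sfH_t\mu=\int\heat t\varphi\,\d\mu$ for $\varphi\in\Lip_b(X)$, cf.\ \eqref{eq:66}), one has $\sfH_t\mu=\int_{X}\varrho_{t,x}\,\d\mu(x)$ and likewise for $\nu$, so $\Sigma\in\AdmissiblePlanII{\sfH_t\mu}{\sfH_t\nu}$. As $\rme^{Kt}\sfd(z,w)\le\sfd(x,y)$ for $\sigma_{x,y}$-a.e.\ $(z,w)$, Fubini's theorem and the monotonicity of $h$ give
\[
  \calC_{\perth Kt}(\sfH_t\mu,\sfH_t\nu)
  \le\int_{X\times X}h\bigl(\rme^{Kt}\sfd(z,w)\bigr)\,\d\Sigma
  \le\int_{X\times X}h(\sfd(x,y))\,\d\pi=\calC_{h}(\mu,\nu),
\]
which is \eqref{eq:69}.

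\emph{Part iii), and the main obstacle.} For $p\in[1,\infty)$, apply part ii) with $h(r)=r^{p}$: then $\perth Kt(r)=\rme^{pKt}r^{p}$, so \eqref{eq:69} becomes $\rme^{pKt}W_{p}^{p}(\sfH_t\mu,\sfH_t\nu)\le W_{p}^{p}(\mu,\nu)$, i.e.\ \eqref{eq:70}; the case $p=\infty$ follows by letting $p\up\infty$ and invoking $W_{\infty}=\lim_{p}W_{p}$ from \eqref{eq:33} (or directly from part i) and the gluing of part ii) applied to a $W_{\infty}$-optimal coupling of $\mu,\nu$). Parts ii)--iii) are routine transport arguments; the genuinely delicate step is part i), namely transplanting Kuwada's $W_{\infty}$-duality into the Dirichlet-form setting — coordinating the pointwise $L^{1}$-gradient estimate with the Hamilton--Jacobi flow of the Hopf--Lax semigroup and with the identification \eqref{eq:78} of $\sfd$ as the intrinsic distance of $\cE$, and dealing with the approximation subtleties when $\mm(X)=\infty$. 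I would carry this over from \cite{AGS12} essentially verbatim, the sole substitution being the sharper $\beta=1$ commutation bound of Corollary~\ref{cor:dual-improved}.
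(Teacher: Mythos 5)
Your proposal follows the paper's argument essentially verbatim: part i) from the $\beta=1$ endpoint of Corollary~\ref{cor:dual-improved} via Kuwada's duality in the form developed in \cite{AGS12}, part ii) by measurable selection of $W_\infty$-optimal couplings between fundamental solutions and disintegration/Fubini, and part iii) by taking $h(r)=r^p$. Your explicit handling of the $p=\infty$ case in iii) via $W_\infty=\lim_{p\up\infty}W_p$ (or directly from i)) is a small clarification the paper leaves implicit, but the structure and all key ingredients coincide.
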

\begin{proof}
  \textbf{i)} follows from \eqref{eq:83} by the Kuwada's duality argument 
  \cite[Prop.~3.7]{Kuwada10}
  as developed by \cite[Lemma 3.4, Theorem 3.5]{AGS12}. 

  \textbf{ii)} 
  Let $\mu,\nu$ with $\calC_h(\mu,\nu)<\infty$ and 
  let $\ggamma\in \AdmissiblePlanII \mu\nu$ be an optimal plan for $\calC_h$.
  We may use a measurable selection theorem (see for instance
  \cite[Theorem 6.9.2]{Bogachev07}
  to select in a $\ggamma$-measurable way optimal plans
  $\ggamma_{x,y}$ for $W_\infty$ between $\varrho_{t,x}$ and
  $\varrho_{t,y}$. Then, we define
  \begin{displaymath}
    \ssigma:=\int_{X\times X}\ggamma_{x,y}\,\d\ggamma(x,y).
  \end{displaymath}
  Notice that $\ssigma\in \Gamma(\sfH_t\mu,\sfH_t\nu)$ since e.g.
  for every $\varphi\in \rmC_b(X)$ we have
  \begin{align*}
    \int_{X\times X}\varphi(x)\,\d\ssigma(x,y)&=
    \int_{X\times X}\int_{X\times
      X}\varphi(x)\,\d\gamma_{u,v}(x,y)\,\d\ggamma(u,v)=
    \int_{X\times X}\int_X
    \varphi(x)\,\d\varrho_{t,u}(x)\,\d\ggamma(u,v)
    \\&=\int_{X}\int_X
    \varphi(x)\,\d\varrho_{t,u}(x)\,\d\mu(u)=
    \int_X \varphi(x)\,\d\sfH_t\mu(x),
  \end{align*}
  and a similar computation holds integrating functions depending only
  on $y$.
  Therefore, since \eqref{eq:68} yields
  \begin{equation}
    \label{eq:71}
    \sfd(x,y)\le \rme^{-Kt}\sfd(u,v)\quad 
    \text{for $\ggamma_{u,v}$-a.e.\ $(x,y)\in X\times X$},
  \end{equation}
  \begin{align*}
    \calC_{\perth Kt}(\sfH_t\mu,\sfH_t\nu)&\le 
    \int_{X\times X}\perth Kt\big(\sfd(x,y)\big)\,\d\ssigma(x,y)
    \\&=
    \int_{X\times X}\int_{X\times
      X}h\big(\rme^{Kt}\sfd(x,y)\big)\,\d\gamma_{u,v}(x,y)\,\d\ggamma(u,v)
    \\&\topref{eq:71}\le 
    \int_{X\times X}\int_{X\times
      X}h\big(\sfd(u,v)\big)\,\d\gamma_{u,v}(x,y)\,\d\ggamma(u,v)
    =\int_{X\times X}
    h\big(\sfd(u,v)\big)\,\d\ggamma(u,v)
    \\&=\calC_h(\mu,\nu).
  \end{align*}
  \textbf{iii)} follows immediately by \eqref{eq:69} by choosing 
  $h(r):=r^p$ so that $\perth Kt(r)=\rme^{pKt}r^p$.
\end{proof}

\def\cprime{$'$}

\end{document}